\documentclass[a4paper,12pt]{article}

\usepackage{pstricks}
\usepackage{graphicx,psfrag}
\usepackage{amsmath}
\usepackage{amssymb}
\usepackage{amsthm}

\textwidth=160mm \oddsidemargin=0mm \evensidemargin=0mm
\tolerance=8400

              \newcommand {\ve}   {\varepsilon}

      \newcommand {\RRR}  {{\mathbb R}}

          \newcommand {\AAA}  {\mathcal{A}}
            
     \newcommand {\beq}  {\begin{equation}}
      \newcommand {\eeq}  {\end{equation}}

      \newtheorem{theorem}{Theorem}
      
      \newtheorem{proposition}{Proposition}
      \newtheorem{remark}{Remark}
      \newtheorem{opr}{Definition}

\author{Alexander Plakhov\thanks{Department of Mathematics,
University of Aveiro, Aveiro 3810-193, Portugal} \and Vera Roshchina\thanks{CIMA, University of \'{E}vora,  Portugal; Ci\^{e}ncia 2008}}

\title{Fractal bodies invisible in 2 and 3 directions}

\date{}

\begin{document}

\maketitle

\begin{abstract}
We study the problem of invisibility for bodies with a mirror
surface in the framework of geometrical optics. We show that
for any two given directions it is possible to construct a
two-dimensional fractal body invisible in these directions.
Moreover, there exists a three-dimensional fractal body
invisible in three orthogonal directions. The work continues
the previous study in \cite{0-resist,PlakhovRoshchina2011},
where two-dimensional bodies invisible in one direction and
three-dimensional bodies invisible in one and two orthogonal
directions were constructed.
\end{abstract}

\begin{quote}
{\small {\bf Mathematics subject classifications:} 37D50, 49Q10
}
\end{quote}

\begin{quote}
{\small {\bf Key words and phrases:} Billiards, invisible bodies, shape optimization, geometrical optics, problems of minimal resistance.}
\end{quote}

\section{Introduction}

Invisibility has fascinated people's imagination since ancient
times: the idea is exploited in folklore, fiction and movies.
This ``magical'' concept is, however, rapidly migrating into
the scientific domain. On the cutting edge of the modern
developments is the design of metamaterials with special
refractive properties, which could, amongst other important
applications, ultimately lead to the creation of a real
invisible cloak. For an overview of the recent works in this
field we refer the reader to our recent article
\cite{PlakhovRoshchina2011}. The aforementioned developments
deal with the wave nature of light, and metamaterials are
engineered at the nanoscale level. The effects specific to
geometrical optics, however, also remain important in modern
technology, mostly in cases where the objects are large enough
for geometrical optics to dominate the wave effects. Examples
include fiber optics, design of lenses (e.g. for photography or
DVD readers), and many others.

In this article we are concerned with invisibility in
billiards. We consider bodies with a perfectly mirrored surface
in a beam of light, or, equivalently, in a flow of
non-interacting billiard particles. Invisibility in a direction
$v$ (where $v$ is a unit vector) means that any light ray which
initially moves along a straight line in this direction, after
several reflections from the body's surface will eventually
move along the same straight line. Invisibility in a set of
directions means that the above is true for any direction from
this set. This problem is closely related to the problem of
minimal resistance going back to Newton \cite{N}. The latter
consists of finding a body, from a given class of bodies, that
experiences the smallest possible force of flow pressure, or
resistance force. Since the 1990s, many interesting results on
this problem have been obtained by various authors (see, e.g.,
\cite{BB,BK,CL1,LO,LP1,RMS_review,ESAIM}). Bodies of zero
resistance in one and two directions are described in
\cite{0-resist} and \cite{PlakhovRoshchina2011} respectively.
In both cases it is possible to construct an invisible body by
arranging several such bodies together in a specific way.  In
\cite{PlakhovRoshchina2011} it was shown that there exist
bodies invisible in two mutually orthogonal directions in the
three-dimensional setting and that bodies invisible in all
directions do not exist.

In this work we continue the study of invisibility and
construct bodies invisible in any two directions in
two-dimensional space and in three orthogonal directions in
three dimensions. Each body in the construction is a union of
infinitely  many pieces of varying size going to zero, where
each piece is a domain with a piecewise smooth boundary. By
slightly abusing the terminology, such a union will be called a
{\it fractal body}, or a {\it solid fractal body}. In a
preliminary construction (Section \ref{ss:2dir prelim}) and in
a limiting case of the basic construction (Section
\ref{s:3dir}) some pieces comprising the body are smooth curves
(in the 2D case) or pieces of surfaces (in the 3D case). The
corresponding body will be called a {\it thin fractal body}.

The article is organized as follows: we first reintroduce some
definitions and briefly revisit earlier results in
Section~\ref{s:prelim}. In Sections~\ref{s:2dir} and
\ref{s:3dir} we explain the construction of bodies invisible in
two and three directions respectively. Section~\ref{s:summary}
contains some final remarks and a brief discussion of open
problems.

\section{Bodies of zero resistance and invisible bodies}\label{s:prelim}

Consider a parallel flow of point particles in $\RRR^n$ moving
with unit velocity $v \in S^{n-1}$ towards a body $B$ at rest.
The flow is so rarefied that the particles do not mutually
interact.  Particles reflect elastically when colliding with
the body's surface and move freely between consecutive
collisions.

We deal with bounded (not necessarily connected) bodies composed of a (possibly infinite) number of piecewise smooth fragments.
   When a particle moving along a straight line and with a constant
velocity hits the boundary of $B$, it is reflected from the
latter without loss of speed, and keeps moving along the new
linear trajectory until the next collision. All reflections are
specular: the angle of incidence just before the collision is
equal to the angle of reflection just after the collision (see
Fig.~\ref{fig:scatt}).
\begin{figure}[h]
\begin{picture}(0,175)
\rput(7.7,3){
\psecurve[linecolor=brown,fillstyle=solid,fillcolor=yellow]
(1.2,-1.5)(1,-0.5)(2.5,1)(1.5,2)(0.3,1.75)(-0.5,2)(-1.6,1)(-0.7,-0.4)
(-1.1,-2.2)(1.2,-1.5)(1,-0.5)(2.5,1)
\psline[linecolor=red,arrows=->,arrowscale=2](-2.985,0.3)(-2,0.3)
\rput(-3.2,0.3){$\xi$}
\rput(-1.8,-2.75){$\xi^+$}
\rput(-1.6,-2){$v^+$}
\rput(-2.5,0.55){$v$}
\rput(0.1,1){\Huge $B$}
\psline[linecolor=red,arrows=->,arrowscale=2](-2,0.3)(-1.2,0.3)(-1.2,-1.8)(-1.65,-2.48)
}
\end{picture}
\caption{ The broken line through the points $\xi$ and $\xi^+$ is a billiard trajectory in the complement of $B$.
    }
\label{fig:scatt}
\end{figure}
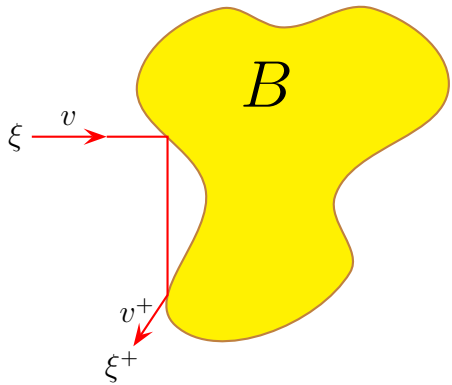
In general, it is possible that the particle never leaves the
body and keeps bouncing off its sides infinitely; however, we
only consider such bodies and velocities of incidence $v$ for
which almost every particle makes a finite number of
reflections. Also note that in some cases the particle may hit
a singular point of the boundary. In this case the further
movement of the particle is not defined. We consider such
bodies and velocities $v$ that the set of initial points $\xi$
for which the motion is undefined (i.e. such that a particle
starting the movement at a point $\xi$ in the direction $v$,
eventually hits a singular point) has zero measure.

In view of this description, for almost any $\xi \in \RRR^n$, the particle that initially moves freely according to $x(t) = \xi + vt$, after a finite number of reflections from $B$ moves freely again according to $x(t) = \xi^+ + v^+t$, where $\xi^+ = \xi^+_{B,v}(\xi)$ and $v^+ = v^+_{B,v}(\xi)$ are measurable functions defined almost everywhere.

\begin{opr}\label{o 2} Let a body $B\subset \RRR^n$. 

\begin{itemize}
  \item[(i)] We say that $B$ {\em has zero resistance in the direction} $v$, if $v^+_{B,v}(\xi) = v$ for all $\xi$ in the domain of $v^+_{B,v}$       (see Fig. \ref{fig zerores invis}\,(a)).
  \item[(ii)] We say that the body $B$ {\it is invisible in the direction} $v$, if it has zero resistance in this direction and, additionally, $\xi^+_{B,v}(\xi) - \xi$ is always parallel to $v$ (see Fig.~\ref{fig zerores invis}\,(b)).
  \item[(iii)]Let $A \subset S^{n-1}$. The body $B$ is said to be {\it invisible/have zero resistance in the set of directions} $A$, if it is invisible/has zero resistance in any direction $v \in A$.
\end{itemize}
\end{opr}
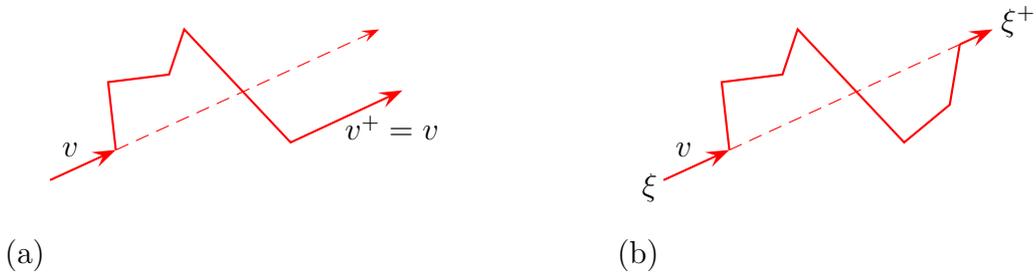
\begin{figure}[h]
\begin{picture}(0,160)
\rput(4,3){
\scalebox{1}{
\psline[linecolor=red,arrows=->,arrowscale=2](-2.165,-1)(-1.299,-0.6)
\psline[linecolor=red,linestyle=dashed,linewidth=0.4pt,arrows=->,arrowscale=2](-1.299,-0.6)(2.165,1)
\psline[linecolor=red,arrows=->,arrowscale=2](-1.299,-0.6)(-1.4,0.3)(-0.6,0.4)(-0.4,1)(1,-0.5)(2.47,0.19)
\rput(2.33,-0.3){$v^+=v$}
\rput(-1.9,-0.6){$v$}
\rput(-2.5,-2){(a)}
\rput(8,0){
\psline[linecolor=red,arrows=->,arrowscale=2](-2.165,-1)(-1.299,-0.6)
\psline[linecolor=red,arrows=->,arrowscale=2](1.732,0.8)(2.165,1)
\psline[linecolor=red,linestyle=dashed,linewidth=0.4pt](-1.299,-0.6)(1.732,0.8)
\psline[linecolor=red](-1.299,-0.6)(-1.4,0.3)(-0.6,0.4)(-0.4,1)(1,-0.5)(1.6,0)(1.732,0.8)
\rput(-1.9,-0.6){$v$}
\rput(-2.35,-1.1){$\xi$}
\rput(2.5,1.1){$\xi^+$}
\rput(-2.5,-2){(b)}
}
}}
\end{picture}
\caption{A typical billiard path in the case of a body (a) having zero resistance in the direction $v$; (b) invisible in the direction $v$. The body is not shown in both cases.}
\label{fig zerores invis}
\end{figure}

Observe that invisibility is a symmetric notion, i.e. if a body is invisible in a direction $v$, it is also invisible in $-v$. This follows directly from the fact that billiard dynamics is time-reversible.

Examples of bodies invisible in one direction constructed with the use of thin mirrors can be found on the Wikipedia page on invisibility \cite{Wiki_invis}, and one of them is reproduced in Fig.~\ref{fig:prelim01}\,(a).
\begin{figure}[h]
\centering
\begin{picture}(420,110)
\put(30,30){\includegraphics[height=80pt, keepaspectratio]{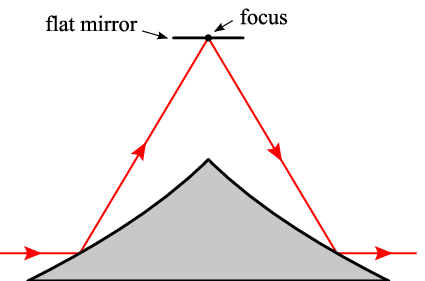}}
\put(190,30){\includegraphics[height=80pt, keepaspectratio]{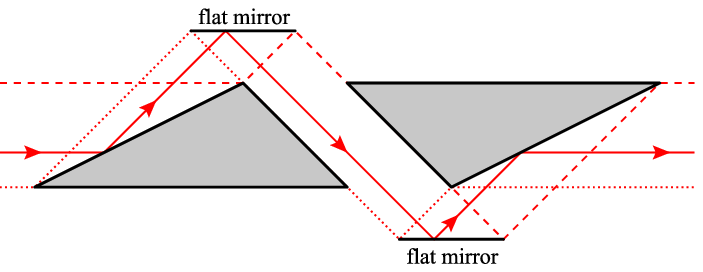}}
\put(30,10){(a)}
\put(190,10){(b)}
\end{picture}
\caption{Bodies invisible in one direction: (a) using two parabolic mirrors and a thin flat mirror; (b) using flat mirrors.}
\label{fig:prelim01}
\end{figure}
The German Wikipedia page \cite{Wiki_invis_de} has got more
interesting examples and videos of prototypes designed by Karl
Bednarik (one of them that uses only flat mirrors is plotted in Fig.~\ref{fig:prelim01}\,(b)).

A solid body (i.e. without use of any thin mirrors) of zero
resistance in one direction was constructed in \cite{0-resist}
(see Fig.~\ref{fig:prelim02}).
\begin{figure}[h]
\centering
\begin{picture}(420,140)
\put(50,30){\includegraphics[height=110pt,
keepaspectratio]{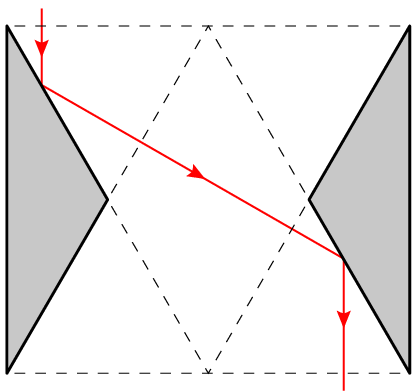}}
\put(240,35){\includegraphics[height=100pt,
keepaspectratio]{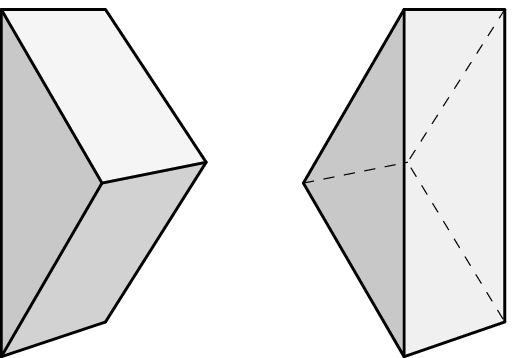}}
\put(50,10){(a)}
\put(240,10){(b)}
\end{picture}
\caption{Solid body of zero resistance in one direction: (a) two-dimensional construction; (b) three-dimensional version.}
\label{fig:prelim02}
\end{figure}
It was also shown in the same work that there exist connected (and even homeomorphic to the ball) bodies of zero resistance in one direction. An invisible body is obtained by using two such bodies consecutively (see \cite{0-resist} for details).

In \cite{PlakhovRoshchina2011} a body of zero resistance in two
directions in a three-dimensional setting was described. The
body is sketched in Fig.~\ref{fig:prelim03}: it employs 8
fragments of congruent parabolic cylinders with two mutually
orthogonal focal lines (each line corresponds to 4 fragments).
An invisible body is constructed by putting 4 such bodies
together, as shown in Fig.~\ref{fig:prelim03}\,(b). For more
details we refer the reader to \cite{PlakhovRoshchina2011}.
\begin{figure}[h]
\centering
\begin{picture}(420,160)
\put(40,30){\includegraphics[height=130pt,
keepaspectratio]{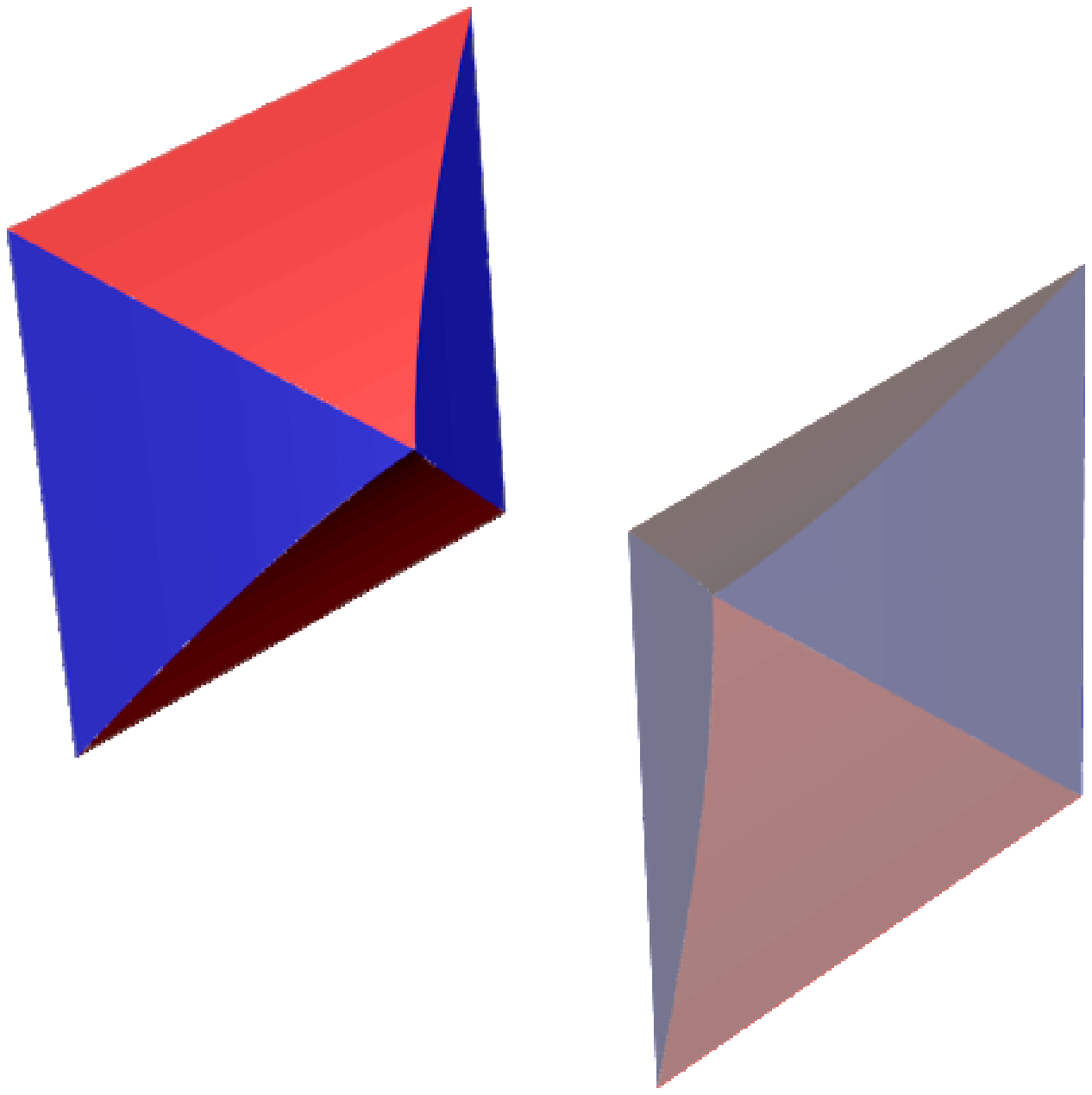}}
\put(250,30){\includegraphics[height=130pt,
keepaspectratio]{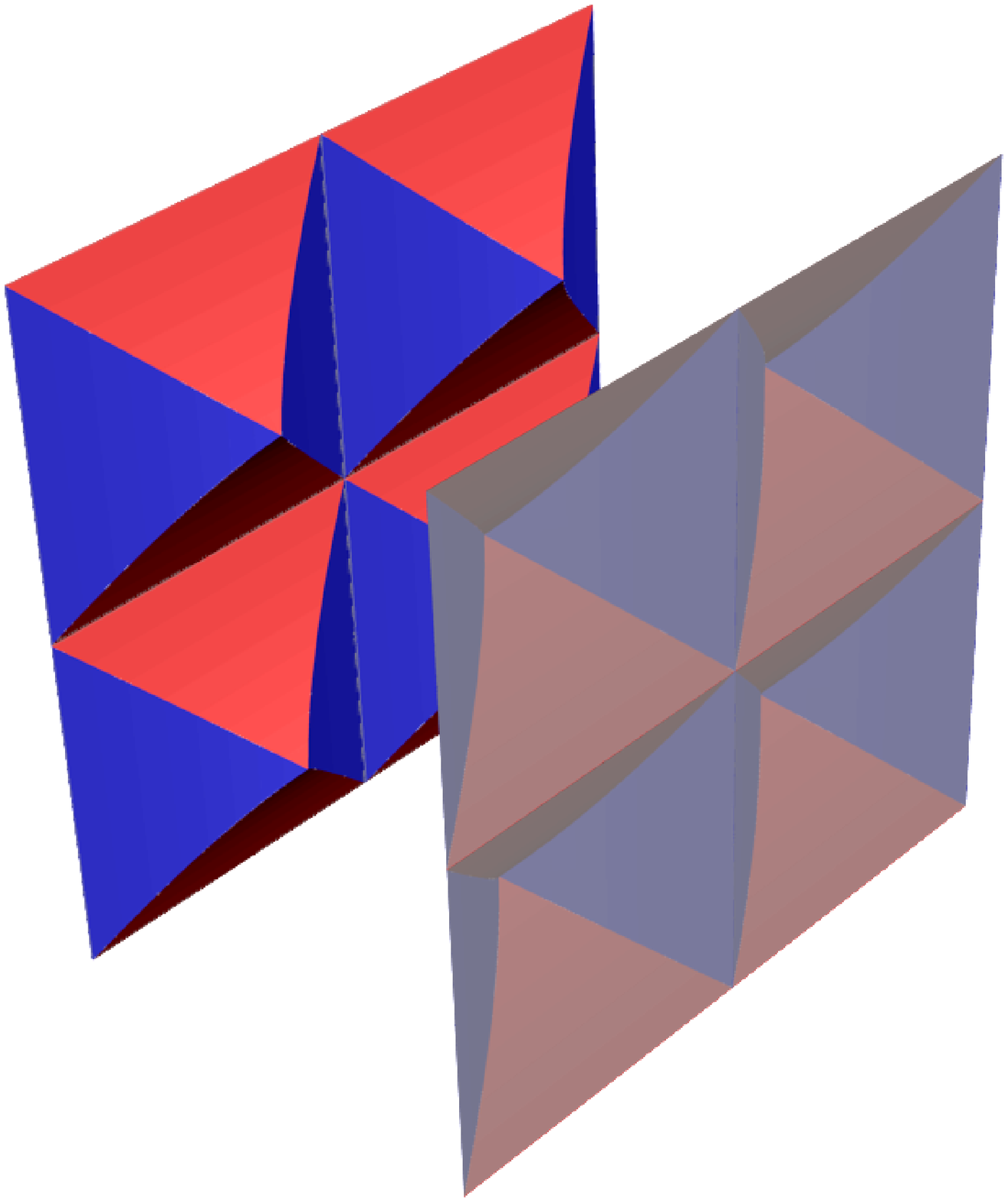}} \put(50,10){(a)}
\put(250,10){(b)}
\end{picture}
\caption{Solid body of zero resistance in two directions.}
\label{fig:prelim03}
\end{figure}

\section{Body invisible in two directions}\label{s:2dir}

\subsection{A thin fractal body invisible in two orthogonal directions}\label{ss:2dir prelim}

We start with a two-dimensional body invisible in two
orthogonal directions. For the clarity of exposition, we assume
that the directions of invisibility are parallel to the $x$-
and $y$-axes.

We construct our body inside the square
$$
S = [-1,1]\times [-1,1] = \{(x,y)\,:\, |x|\leq 1,\ |y|\leq 1\}.
$$
Consider a thin parabolic mirror (i.e. having zero thickness)
$p_1$ given by the equation
$$
p_1 = \bigl\{(x,y)\,:\, y = \frac{1}{2}x^2+\frac{1}{2},\ |x|\leq 1\bigr\}.
$$
Observe that $(-1,1)\in p_1$, $(1,1)\in p_1$, and the focus of
the parabola $y = \frac{1}{2}x^2+\frac{1}{2}$ is located at
$(0,1)$. The axis is then given by the equation $x=0$. Also
note that the mirror is located above the graph of the function
$y=|x|$.

A particle moving ``downwards'', i.e., along the direction
$(0,-1)$, would be reflected towards the focal point due to the
reflective property of a parabola (see
Fig.~\ref{fig:parab01}\,(a)).
\begin{figure}[h]
\centering
\begin{picture}(420,206)
\put(19,30){\includegraphics[height=176pt,
keepaspectratio]{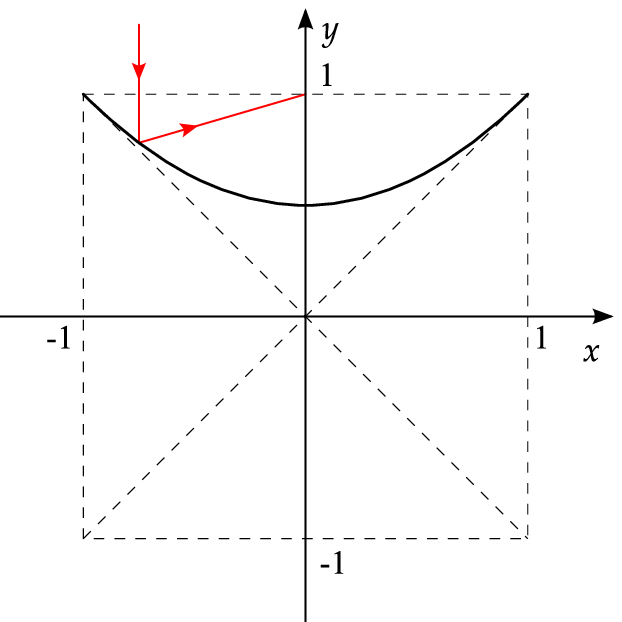}}
\put(225,30){\includegraphics[height=176pt,
keepaspectratio]{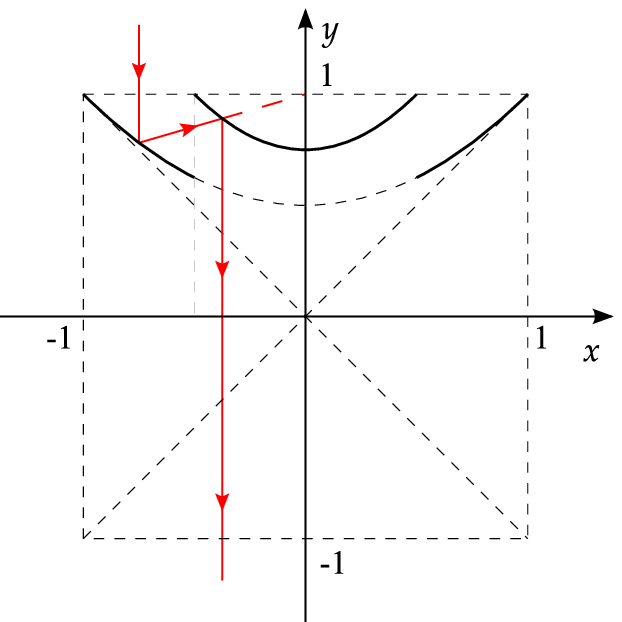}}
\put(19,10){(a)}
\put(225,10){(b)}
\end{picture}
\caption{Fractal body invisible in two directions: (a) constructing the first parabolic mirror; (b) adding a similar confocal mirror.}
\label{fig:parab01}
\end{figure}

We add one more parabolic mirror $p_2$ to our construction.
This mirror is similar to the previous one, only two times
smaller, while the foci of the corresponding parabolas coincide
at $(0,1)$. We let
$$
p_2 = \{(x,y)\, :\, y = x^2+\frac{3}{4},\ |x|\leq \frac{1}{2}\}.
$$

Now all particles that go in the downward direction and pass
the line segment $[-1,-\frac{1}{2}]\times\{1\})$ (as well as
$[\frac{1}{2},1]\times\{1\})$), are first reflected towards the
focal point, move towards $p_2$, and after the collision with
$p_2$ are redirected downwards (because of the aforementioned
property of a parabola and coinciding foci of the two
parabolas). If we remove a piece that is directly behind $p_2$
from $p_1$, the resulting body
$$
p'_1 = \{(x,y)\,:\, y = \frac{1}{2}x^2+\frac{1}{2},\ \frac{1}{2}\leq |x|\leq 1\},
$$
is not obstructing the further movement of the particles, and
they leave the body with the same velocity as they had before
entering the body (see Fig.~\ref{fig:parab01}\,(b)).

If we repeat this construction process infinitely, adding
figures similar to $p_1$ and cutting out the middle sections
of the relevant parabolas, we obtain a sequence $\{p'_k\}_1^\infty$ of parabolic mirror segments:
$$
p_k' = \left\{(x,y)\,:\, y = 2^{k-2}x^2+1-2^{-k}, \; 2^{-k}\leq |x|\leq 2^{-k+1}\right\}.
$$
This mirror sequence is plotted in Fig.~\ref{fig:parab02}\,(a). The union of these segments is denoted by $P = \cup_{k=1}^\infty p'_k$.

Take the sequence of parabolic mirror pieces $ q_k$ symmetric to $p'_k$ with respect to the $x$-axis,
$$
q_k = \left\{(x,y)\,:\, y = -2^{k-2}x^2-1+2^{-k}, \; 2^{-k}\leq |x|\leq 2^{-k+1}\right\}
$$
and let $Q = \cup_{k=1}^\infty q_k$; the union $P \cup Q$ is a fractal body invisible in the direction $(0,\pm 1)$ (see Fig.~\ref{fig:parab02}\,(b)).
\begin{figure}[h]
\centering
\begin{picture}(420,206)
\put(19,30){\includegraphics[height=176pt,
keepaspectratio]{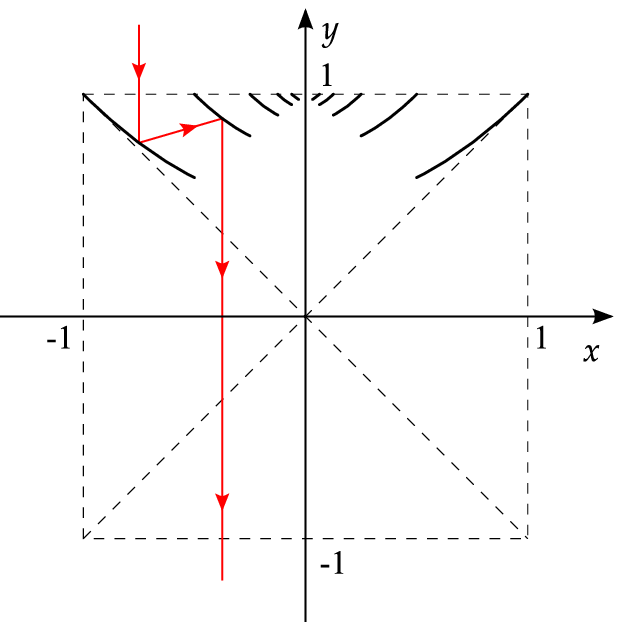}}
\put(225,30){\includegraphics[height=176pt,
keepaspectratio]{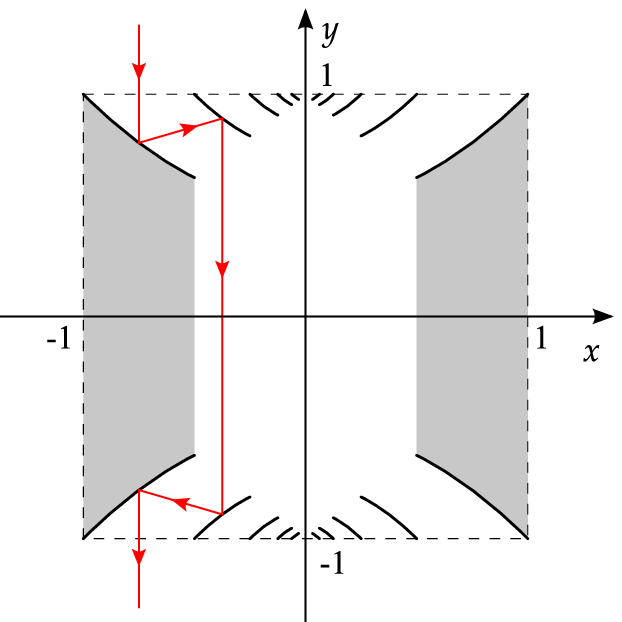}}
\put(19,10){(a)}
\put(225,10){(b)}
\end{picture}
\caption{Fractal body invisible in two directions: (a)
the basic fractal construction; (b) body invisible in the vertical direction.}
\label{fig:parab02}
\end{figure}
Indeed, since the lower part $Q$ of our body is symmetric to
the upper part $P$, it is redirecting the particles back to
their original trajectories.

Observe that the area
$$
G = \{(x,y)\,:\, |y|\leq \frac{1}{2}x^2+\frac{1}{2}, \ \frac{1}{2}\leq |x|\leq 1\}
$$
(greyed in Fig.~\ref{fig:parab02}\,(b)) is completely
``shaded'' from the particles moving parallel to the $y$-axis.
We can hence use this area to make our body invisible in a
second direction. We simply add one more construction identical
to the original one, but rotated by $\frac{\pi}{2}$. It is not
difficult to observe that it fits into $G$, and also makes the
body invisible in the direction $(\pm 1,0)$. Also observe that
the four grey blocks in the corners of the square (see
Fig.~\ref{fig:parab03}), are never accessed by the particles
moving in the directions of invisibility. We can hence include
these areas into our body.
\begin{figure}[h]
\centering
\begin{picture}(176,176)
\put(0,0){\includegraphics[height=176pt,
keepaspectratio]{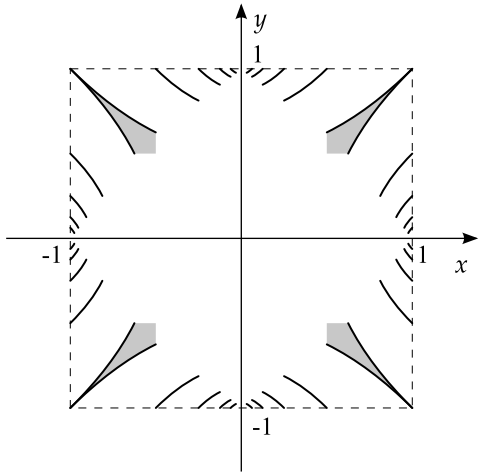}}
\end{picture}
\caption{A thin fractal body invisible in two orthogonal directions}\label{fig:parab03}
\end{figure}

Thus, we have proved the following

\begin{theorem}\label{thm:2Dthin}
There exists a {\rm thin fractal} body invisible in two perpendicular directions.
\end{theorem}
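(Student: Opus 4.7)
Since the construction of $B = P \cup Q$ (and its augmentation by a rotated copy) is already exhibited in the preceding text, the proof reduces to verifying that the asserted ray tracing is correct for almost all initial positions and that the two perpendicular sub-bodies do not interfere.

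Fix $k \ge 1$ and consider a particle with velocity $v = (0,-1)$ and initial abscissa $x_0 \in (2^{-k}, 2^{-k+1})$. Because the horizontal supports of the mirrors $\{p'_j\}_{j=1}^\infty$ are pairwise disjoint, the particle first strikes $p'_k$; by the focal property of a parabola it is redirected toward the common focus $F_+ = (0,1)$. The central step is to show that this reflected ray next meets $p'_{k+1}$ at abscissa $x_0/2$. I would avoid the explicit intersection quadratic and instead use the homothety $H$ centred at $F_+$ with ratio $1/2$: a direct check gives $H(p'_k) = p'_{k+1}$, and $H$ fixes every line through $F_+$, so the unique first crossing of the reflected ray with $p'_{k+1}$ is the $H$-image of its first crossing with $p'_k$, namely the point of abscissa $x_0/2$. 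Since $x_0/2 \in (2^{-k-1}, 2^{-k})$ lies in the actual support of $p'_{k+1}$, the second reflection does occur, and the focal property turns the outgoing ray into another vertical downward ray, now at $x = x_0/2$.

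The particle then continues freely below $P$ and enters $Q$. Because $Q$ is the image of $P$ under reflection across the $x$-axis and billiard dynamics is time-reversible, the portion of the trajectory inside $Q$ is precisely the mirror image of a time-reversed $P$-trajectory: the particle hits $q_{k+1}$, is redirected toward $F_- = (0,-1)$, hits $q_k$, and finally escapes vertically downward at $x = x_0$. Hence $\xi^+ - \xi$ is parallel to $v$, so $B$ is invisible in the vertical direction. The countable set of dyadic boundary abscissas $\pm 2^{-k}$, together with the axis $x_0 = 0$, has measure zero and is admissible under Definition~1.

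For horizontal invisibility, observe that every segment of the trajectory above lies outside $G$ (either above the parabola $y = x^2/2 + 1/2$ with $|x|\ge 1/2$, or in the strip $|x|<1/2$), so any body placed inside $G$ is invisible to the vertical particles. I then inscribe in $G$ a copy of $B$ rotated by $\pi/2$, with foci now at $(\pm 1, 0)$, and deduce horizontal invisibility by the same ray-tracing argument with $x$ and $y$ interchanged. A symmetric shadowing argument shows that horizontal particles avoid the original $B$, which lies in the $\pi/2$-rotate of $G$; and the four corner squares of $S$, being inaccessible to both families of particles, can be adjoined without affecting invisibility. The main technical obstacle throughout is the confocal scaling identity of the second paragraph: once it is verified that the reflection off $p'_k$ lands on $p'_{k+1}$ at abscissa exactly $x_0/2$, with no other mirror encountered in between, the rest is forced by the focal property of parabolas and the two evident symmetries ($y \mapsto -y$ and $\pi/2$-rotation) of the construction.
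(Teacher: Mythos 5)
Your proposal is correct and follows the paper's construction and verification essentially step for step: the confocal arcs $p'_k$ scaled by powers of $2$, the reflection symmetry of $Q$ combined with time-reversibility for the lower half, and the shading of the region $G$ (plus the corner blocks) for the second direction; the only slip is cosmetic --- after the third reflection, off $q_{k+1}$, the particle travels along the focal chord \emph{away} from $F_-$ toward $q_k$, not toward $F_-$, but your governing symmetry argument for the lower half is the right one and renders this harmless. The one genuine refinement is your use of the homothety of ratio $1/2$ centred at the common focus to prove that the ray reflected off $p'_k$ lands on $p'_{k+1}$ at abscissa exactly $x_0/2$; the paper leaves this implicit in Section \ref{ss:2dir prelim} but deploys precisely this homothety argument (Proposition \ref{propoHi}) when generalizing to arbitrary directions in Section \ref{ss:2dir nonort}, so you have in effect anticipated the paper's own later technique.
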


\subsection{A body invisible in two arbitrary directions}\label{ss:2dir nonort}

In this section we go further and construct a two-dimensional fractal body (without thin parts) invisible in {\em any} two directions.

This time we construct our body inside a rhombus $ABCD$ with sides parallel to the directions of invisibility.
Assume that we are given two non-parallel directions, not necessarily perpendicular. 
We rotate the coordinate system in a way that one of the
directions is vertical. Our rhombus has got two sides parallel
to the $y$-axis, and the other two parallel to the second
direction of invisibility. The centre of the rhombus coincides
with the origin.

This time our construction requires some preparatory work. Denote by $-c$ the abscissa of $A$ and select two infinite sequences of positive values $a_0,\, a_1,\, a_2, \ldots$ and $c_0,\, c_1,\, c_2, \ldots$ recursively according to the following rules.

Let $c_0 = c$ and $a_0 = +\infty$, and choose arbitrary $c_1$ satisfying $0 < c_1 < c_0$. On the $i$th step of the procedure, $i = 1,\, 2, \ldots$ select arbitrary $a_i$ satisfying the inequalities
\beq\label{ai}
0 < a_i < a_{i-1} \quad \text{and} \quad a_i(c_{i-1} -2c_i) < c_i^2
\eeq
and put
\beq\label{ci}
c_{i+1} = \frac{(c_i + a_i)^2}{c_{i-1} + a_i} - a_i.
\eeq
Using (\ref{ai}) and (\ref{ci}), one easily gets that $c_{i} > 0$ and derives by induction that $c_{i+1} < c_i$.

Now we are ready to draw the parabolas of our construction. In the description below, the points on the side $AB$ are marked by the values of their abscissas: $-\mathfrak{c}_0,\, -\mathfrak{c}_1, -\mathfrak{c}_2,\ldots, \mathfrak{a}_1,\, \mathfrak{a}_2,\ldots$ (see Fig. \ref{fig rhomb prelim}).

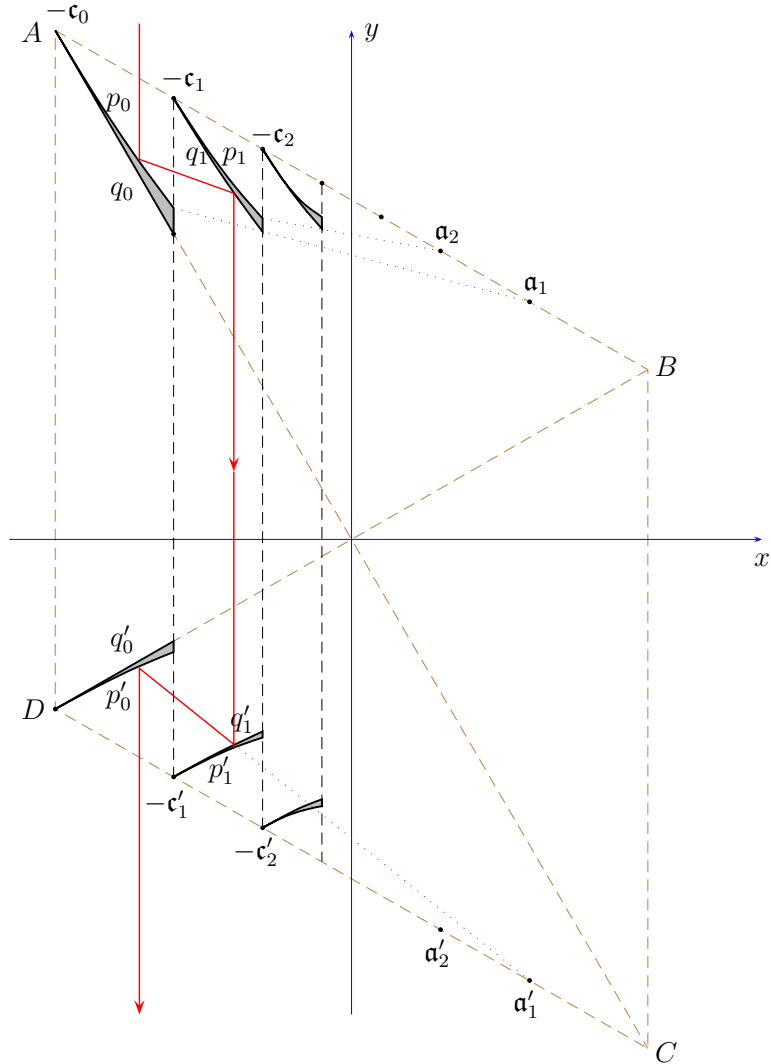
\begin{figure}
\begin{picture}(0,385)
\rput(7.7,11.3){
\scalebox{0.9}{
\pspolygon[linestyle=dashed,linewidth=0.4pt,linecolor=brown](-4.33,2.5)(4.33,-2.5)(4.33,-12.5)(-4.33,-7.5)
\psline[linestyle=dashed,linewidth=0.4pt,linecolor=brown](4.33,-2.5)(-4.33,-7.5)
\psline[linestyle=dashed,linewidth=0.4pt,linecolor=brown](-4.33,2.5)(4.33,-12.5)
\psline[linewidth=0.3pt,linecolor=blue,arrows=->,arrowscale=1.2](-5,-5)(6,-5)
\rput(6,-5.3){$x$}
\psline[linewidth=0.3pt,linecolor=blue,arrows=->,arrowscale=1.2](0,-12)(0,2.5)
\rput(0.3,2.47){$y$}
\psdots[dotsize=2pt](-2.6,1.5)(-1.3,0.75)(-0.433,0.25)(2.6,-1.5)(1.3,-0.75)(0.433,-0.25)
(-2.6,-0.5)(2.6,-11.5)(-4.33,-7.5)(-2.6,-8.5)(1.3,-10.75)(-1.3,-9.25)
\psdots[dotsize=1pt](-1.3,0.75)(-0.9,0.15)(-0.65,-0.17)(-0.433,-0.42)
  (-1.3,-9.25)(-1,-9.095)(-0.7,-8.99)(-0.433,-8.93)
   \pscustom[linewidth=0.8pt,fillstyle=solid,fillcolor=lightgray]{
\pscurve[linewidth=0.3pt](-4.33,2.5)(-3.75,1.54)(-3.18,0.678)(-2.6,-0.118)
\psline[linewidth=0.3pt,liftpen=1](-2.6,-0.5)(-4.33,2.5)}
   \pscustom[linewidth=0.8pt,fillstyle=solid,fillcolor=lightgray]{
\pscurve[linewidth=0.3pt](-2.6,1.495)(-2.1,0.678)(-1.7,0.08)(-1.3,-0.467)
\pscurve[linewidth=0.3pt,liftpen=1](-1.3,-0.27)(-1.7,0.196)(-2.1,0.735)(-2.6,1.5)}
   \pscustom[linewidth=0.8pt,fillstyle=solid,fillcolor=lightgray]{
\pscurve[linewidth=0.3pt](-1.3,0.75)(-0.9,0.15)(-0.65,-0.17)(-0.433,-0.422)
\pscurve[linewidth=0.3pt,liftpen=1](-0.433,-0.249)(-0.7,-0.05)(-1,0.295)(-1.3,0.75)}
\psline[linestyle=dotted,linewidth=0.4pt](-2.6,-0.118)(2.6,-1.5)
\psline[linestyle=dotted,linewidth=0.4pt](-1.3,-0.27)(1.3,-0.75)
     \pscustom[linewidth=0.8pt,fillstyle=solid,fillcolor=lightgray]{
   \psline[linewidth=0.3pt](-4.33,-7.5)(-2.6,-6.5)
   \pscurve[linewidth=0.3pt,liftpen=1](-2.6,-6.66)(-3.15,-6.89)(-3.7,-7.16)(-4.33,-7.5)}
     \pscustom[linewidth=0.8pt,fillstyle=solid,fillcolor=lightgray]{
   \pscurve[linewidth=0.3pt](-2.6,-8.5)(-2.1,-8.22)(-1.7,-8.02)(-1.3,-7.83)
   \pscurve[linewidth=0.3pt,liftpen=1](-1.3,-7.92)(-1.7,-8.06)(-2.1,-8.24)(-2.6,-8.5)}
     \pscustom[linewidth=0.8pt,fillstyle=solid,fillcolor=lightgray]{
   \pscurve[linewidth=0.3pt](-1.3,-9.25)(-1,-9.082)(-0.7,-8.933)(-0.433,-8.83)
   \pscurve[linewidth=0.3pt,liftpen=1](-0.433,-8.93)(-0.7,-8.99)(-1,-9.095)(-1.3,-9.25)}
\psline[linestyle=dashed,linewidth=0.3pt](-2.6,1.5)(-2.6,-8.5)
\psline[linestyle=dashed,linewidth=0.3pt](-1.3,0.75)(-1.3,-9.25)
\psline[linestyle=dashed,linewidth=0.3pt](-0.433,0.25)(-0.433,-9.75)
 \psline[linecolor=red,linewidth=0.6pt,arrows=->,arrowscale=1.6](-3.1,2.6)(-3.1,0.6)
    (-1.72,0.1)(-1.72,-4)  
    \psline[linecolor=red,linewidth=0.6pt,arrows=->,arrowscale=1.6](-1.72,-4)(-1.72,-8.02)(-3.1,-6.9)(-3.1,-12)
    \psline[linecolor=red,linestyle=dotted,linewidth=0.5pt](-3.1,-6.9)(2.6,-11.5)
\rput(-4.67,2.47){$A$}
\rput(4.6,-2.45){$B$}
\rput(4.6,-12.55){$C$}
\rput(-4.65,-7.5){$D$}
\rput(-4.15,2.77){$-\mathfrak{c}_0$}
\rput(-2.45,1.77){$-\mathfrak{c}_1$}
\rput(-1.15,0.95){$-\mathfrak{c}_2$}
\rput(2.7,-1.25){$\mathfrak{a}_1$}
\rput(1.4,-0.5){$\mathfrak{a}_2$}
\rput(-2.7,-8.9){$-\mathfrak{c}_1'$}
\rput(-1.4,-9.6){$-\mathfrak{c}_2'$}
\rput(2.55,-11.85){$\mathfrak{a}_1'$}
\rput(1.25,-11.1){$\mathfrak{a}_2'$}
\rput(-3.35,0.12){$q_0$}
\rput(-3.35,-6.47){$q_0'$}
\rput(-3.4,1.43){$p_0$}
\rput(-3.4,-7.3){$p_0'$}
 \rput(-2.25,0.64){$q_1$}
 \rput(-1.6,-7.65){$q_1'$}
 \rput(-1.7,0.65){$p_1$}
 \rput(-1.9,-8.4){$p_1'$}
}}
\end{picture}
\caption{Preliminary construction: A body inside a rhombus invisible in one direction.}
\label{fig rhomb prelim}
\end{figure}

For any $i = 0,\, 1,\, 2,\ldots$ consider the parabola through
$-\mathfrak{c}_i$ with a vertical axis and with the focus at
$a_{i+1}$. Denote by $p_i$ the arc of this parabola bounded by
the points with abscissas $-c_{i}$ and $-c_{i+1}$. Then for any
$i = 1,\, 2,\ldots$ consider the parabola through
$-\mathfrak{c}_i$ with a vertical axis and with the focus at
$a_{i}$. The arc of this parabola situated between the points
with abscissas $-c_{i}$ and $-c_{i+1}$ is denoted by $q_i$.
Also denote by $q_0$ the part of the diagonal $AC$ between $A$
and the point with abscissa $-c_1$. Each arc $p_i$ is situated
above $q_i$; more precisely, both $p_i$ and $q_i$ are the
graphs of functions $p_i(-t)$ and $q_i(-t)$,\, $t \in
[c_{i+1},\, c_{i}]$ with $p_i(-t) > q_i(-t)$ for any $t \in
(c_{i+1},\, c_{i}]$. Denote by $P_i,\ i = 0,\, 1,\, 2,\ldots$
the set bounded by the arcs $p_i$ and $q_i$ from above and
below and by a segment of the vertical line $x_i = -c_{i+1}$ on
the right.

Similarly, denote by $-\mathfrak{c}_i'$ and $\mathfrak{a}_i'$,
respectively, the points on the side $CD$ whose abscissas are
$-c_i$ and $a_i$ (the same values as above). Denote by $p_i'$
($q_i'$) the arc of parabola through $-\mathfrak{c}_i'$ with
vertical axis and with focus at $\mathfrak{a_{i+1}'}$
($\mathfrak{a_{i}'}$) bounded by the vertical lines $x = -c_i$
and $x = -c_{i+1}$. Introduce the sets $P_i'$ bounded by
$p_i',\ q_i'$ and the segment of line $x = -c_{i+1}$.

Denote by $H_i$\, $(i = 1,\, 2,\ldots)$ the homothety with the
centre $\mathfrak{a}_i$ and ratio $r_i = (c_i + a_i)/(c_{i-1} +
a_i)$.

\begin{proposition}\label{propoHi}
The arcs $p_{i-1}$ and $q_i$ are homothetic under $H_i$.
\end{proposition}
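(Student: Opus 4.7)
The plan is to decompose the claim into two parts: identifying the full parabolas containing $p_{i-1}$ and $q_i$, and then matching the arc endpoints. By construction, both arcs lie on parabolas that share the focus $\mathfrak{a}_i$ and the vertical axis $x = a_i$; and, from the geometry (both arcs hang below the line $AB$ with vertex above $\mathfrak{a}_i$), the two parabolas open in the same direction.

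I would then invoke the standard fact that a homothety with positive ratio centered at the focus of a parabola sends this parabola to another parabola with the same focus, axis, and opening direction. This follows at once from the focus--directrix definition: if $|PF| = \mathrm{dist}(P, \ell)$ and $P' = H_i(P)$, then
\[
|P'F| = r_i\,|PF| = r_i\,\mathrm{dist}(P, \ell) = \mathrm{dist}(P', H_i(\ell)),
\]
while $H_i(\ell)$ is a horizontal line lying on the same side of $F$ as $\ell$. Consequently, the image under $H_i$ of the parabola containing $p_{i-1}$ is a confocal, coaxial, same-opening parabola, hence completely determined by any one further point it passes through.

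To identify this image with the parabola containing $q_i$, I verify $H_i(-\mathfrak{c}_{i-1}) = -\mathfrak{c}_i$. Since both $-\mathfrak{c}_{i-1}$ and the homothety center $\mathfrak{a}_i$ lie on the side $AB$, so does the image; its abscissa equals
\[
a_i + r_i(-c_{i-1} - a_i) = a_i - (c_i + a_i) = -c_i
\]
directly from the definition $r_i = (c_i + a_i)/(c_{i-1} + a_i)$. Hence $H_i(-\mathfrak{c}_{i-1}) = -\mathfrak{c}_i$, and the image parabola coincides with the parabola containing $q_i$.

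It remains to match arc endpoints. The arc $p_{i-1}$ is cut off by the vertical lines $x = -c_{i-1}$ and $x = -c_i$; under $H_i$ the line $x = x_0$ is sent to $x = a_i + r_i(x_0 - a_i)$. The first line maps to $x = -c_i$ by the previous computation, while the second maps to
\[
a_i + r_i(-c_i - a_i) = a_i - \frac{(c_i + a_i)^2}{c_{i-1} + a_i} = -c_{i+1},
\]
where the final equality is exactly the recursion (\ref{ci}). These are precisely the bounding vertical lines of $q_i$, so $H_i(p_{i-1}) = q_i$. The only nontrivial step is this last endpoint computation, and this is precisely why the recursion (\ref{ci}) has been defined in the form it takes.
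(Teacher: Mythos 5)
Your proof follows essentially the same route as the paper's: both use the common focus $\mathfrak{a}_i$ to conclude the two parabolas are homothetic with centre there, pin down the ratio $r_i$ via the left endpoints $-\mathfrak{c}_{i-1} \mapsto -\mathfrak{c}_i$, and then verify via the recursion (\ref{ci}) that the right endpoint of $p_{i-1}$ maps to that of $q_i$. You merely make explicit two steps the paper leaves as ``readily concluded'' --- the focus--directrix justification that a homothety centred at the focus preserves confocality, and the abscissa computations --- which is fine and correct.
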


\begin{proof}
Note that the parabolas containing $p_{i-1}$ and $q_i$ have the
same focus at $\mathfrak{a}_i$; therefore they are homothetic
with the centre at this point. Since the points
$-\mathfrak{c}_{i-1}$ and $-\mathfrak{c}_i$ (which are the left
endpoints of the arcs $p_{i-1}$ and $q_i$) are homothetic, one
readily concludes that the ratio equals $r_i$, and therefore,
the homothety is $H_i$. Further, the right endpoint of
$p_{i-1}$ has abscissa $-c_i$; using (\ref{ci}), one verifies
that its image under $H_i$ has the abscissa $-c_{i+1}$, and
therefore, coincides with the right endpoint of $q_i$. Thus,
the arcs $p_{i-1}$ and $q_i$ belong to homothetic parabolas,
and their right and left endpoints are homothetic; therefore
they are also homothetic under $H_i$.
\end{proof}

Consider the fractal body (greyed in Figure \ref{fig rhomb
prelim})
$$
\mathcal{A}_L = (\cup_{i=0}^\infty P_i) \cup (\cup_{i=0}^\infty P_i').
$$
Now we are in a position to prove the following Proposition.

\begin{proposition}\label{invisPrelim}
The body $\mathcal{A}_L$ is invisible in the vertical direction.
\end{proposition}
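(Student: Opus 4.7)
The plan is to trace the billiard trajectory of a downward vertical particle and show that after at most four reflections it leaves the body still moving vertically at the same abscissa $x_0$; from this both requirements of Definition~\ref{o 2}(ii), namely $v^+=v$ and $\xi^+-\xi\parallel v$, follow immediately. Particles whose vertical line never meets $\mathcal{A}_L$ (for instance $x_0\notin[-c,0]$, or $x_0$ in a possible accumulation gap $(-c_\infty,0)$, or at the measure-zero singular values $\{-c_i\}$) are invisible trivially. For a generic $x_0$, let it lie in the unique strip $(-c_{i_0},-c_{i_0+1})$; since $p_{i_0}$ is the topmost piece of $\mathcal{A}_L$ in this strip, the first impact is at $X_0=(x_0,p_{i_0}(x_0))$.

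The three subsequent impacts are located by applying Proposition~\ref{propoHi} twice, once above and once (verbatim) below. The parabolas carrying $p_{i_0}$ and $q_{i_0+1}$ share the focus $\mathfrak{a}_{i_0+1}$ and are related by the homothety $H_{i_0+1}$ of centre $\mathfrak{a}_{i_0+1}$ and ratio $r_{i_0+1}<1$. By the parabolic reflection property the reflection at $X_0$ sends the ray along the line $X_0\mathfrak{a}_{i_0+1}$, which meets $q_{i_0+1}$ precisely at $H_{i_0+1}(X_0)$, strictly between $X_0$ and the focus. Because $q_{i_0+1}$ has the same focus, the reflection there returns the ray to the vertical at the abscissa
\[
x_1 = a_{i_0+1} + r_{i_0+1}(x_0 - a_{i_0+1}) \in (-c_{i_0+1},-c_{i_0+2}),
\]
the inclusion being exactly the recursion~(\ref{ci}). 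The particle then falls through the empty middle of the rhombus and strikes the upper arc $q_{i_0+1}'$ of $P_{i_0+1}'$; the downward-opening confocal pair $(p_{i_0}',q_{i_0+1}')$ is homothetic again at $\mathfrak{a}_{i_0+1}'$ with the same ratio $r_{i_0+1}$, so the reflected ray now travels \emph{away} from $\mathfrak{a}_{i_0+1}'$ and reaches the point $(H_{i_0+1}')^{-1}(X')\in p_{i_0}'$ whose abscissa is $a_{i_0+1}+r_{i_0+1}^{-1}(x_1-a_{i_0+1})=x_0$. The fourth reflection restores verticality, and the particle leaves the rhombus through $CD$ at abscissa $x_0$.

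The hard part will be the side verification that the trajectory between the designated hits is not intercepted by any other piece of $\mathcal{A}_L$, in particular the right-hand vertical segments of $P_{i_0}$ and $P_{i_0+1}'$ and the arcs $q_{i_0}$, $p_{i_0+1}$ together with their lower analogues. This should reduce to two geometric observations: first, any chord of an upward parabola through its focus lies strictly above the rest of the parabola between its endpoints, which forces the segment from $X_0$ to $H_{i_0+1}(X_0)$ to remain above $p_{i_0}$ and hence above the right-hand segment of $P_{i_0}$; second, the strict nesting $-c<-c_1<-c_2<\cdots$ of the strips confines every further possible intersection to the two indices $i_0$ and $i_0+1$. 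Once these checks are made, the two homothety identities above close the argument and invisibility in the vertical direction is established.
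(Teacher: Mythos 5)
Your argument is essentially the paper's own proof: both identify the four reflections (off $p_{i_0}$, $q_{i_0+1}$, $q_{i_0+1}'$, $p_{i_0}'$), use the confocal-parabola reflection property to route the ray through $\mathfrak{a}_{i_0+1}$, and invoke the homothety of Proposition~\ref{propoHi} together with its inverse in the lower half to conclude that the outgoing vertical line coincides with the incoming one. The ``hard part'' you defer (checking that no other piece of $\mathcal{A}_L$ intercepts the trajectory) is in fact left implicit in the paper's proof as well, so your write-up is, if anything, slightly more explicit about what remains to be verified.
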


\begin{proof}
Consider a particle falling vertically downward with the
velocity $(0, -1)$ along a line with abscissa $x$. We assume
that $x \in [-c,\, 0]$; otherwise the particle does not hit the
body and there is nothing to prove. If $x = -c_i \ (i = 0,\,
1,\, 2,\ldots)$, the particle hits the body at a singular point
and the motion is not defined from this point. Otherwise, $x$
belongs to an interval $(c_{i-1},\, c_i), \ i = 1,\, 2,\ldots$
($i = 1$ in Fig. \ref{fig rhomb prelim}).

The particle is first reflected by $p_{i-1}$ and then moves to the focus $\mathfrak{a}_i$; then it makes the second reflection from $q_i$ and moves along a vertical line. Since the line through the two reflection points contains the focus, these two points (and therefore the vertical lines through these points) are homothetic under $H_i$.

The particle then makes the third and fourth reflections from $q_i'$ and $p_{i-1}'$, and finally, moves freely downwards. Repeating the above argument, one concludes that the vertical lines through the third and fourth reflection points are homothetic under the inverse homothety $H_i^{-1}$, and therefore, the vertical lines containing the initial and final parts of the trajectory coincide.
\end{proof}

By adding the set $\mathcal{A}_R$ symmetric to $\mathcal{A}_L$
with respect to the centre of the rhombus, one gets the body
$\mathcal{A} = \mathcal{A}_L \cup \mathcal{A}_R$, which is also
invisible in the vertical direction $(0, -1)$ (Fig.~\ref{fig
rhomb}\,(a)).

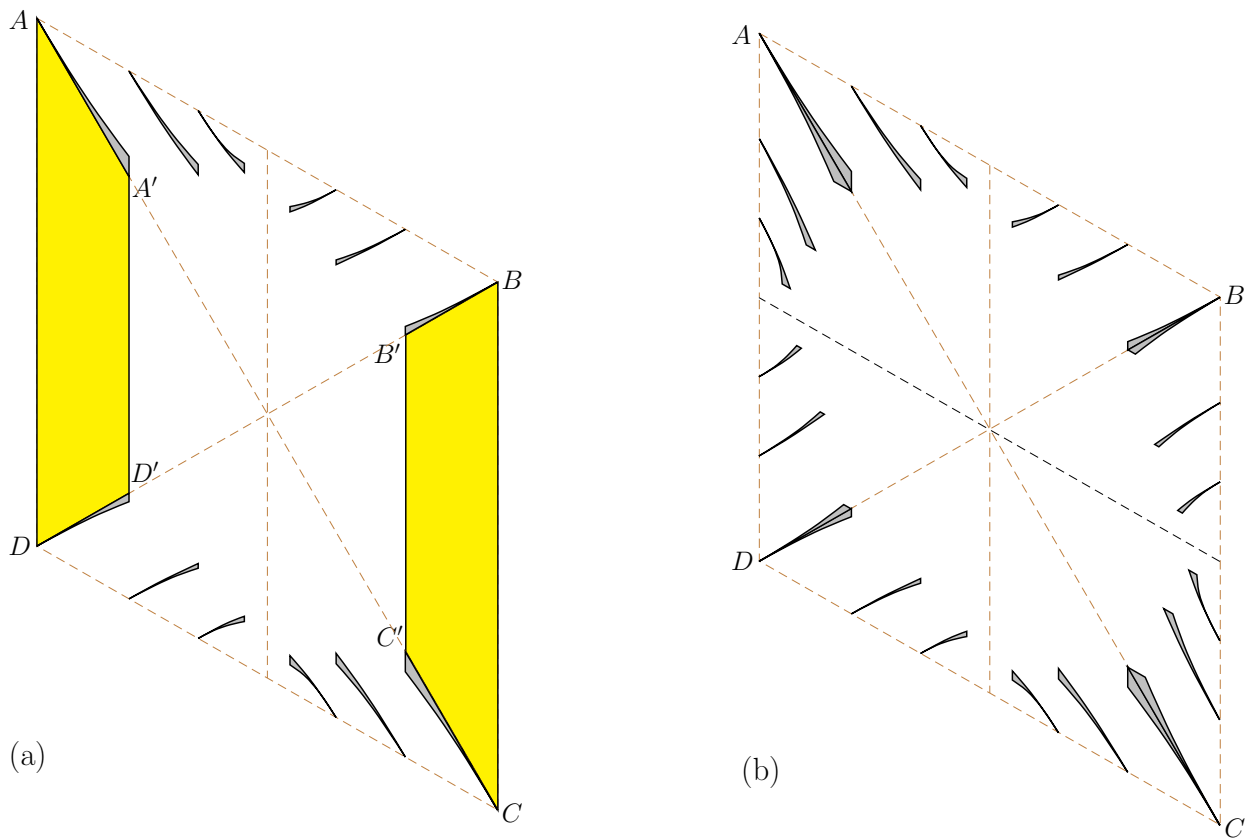
\begin{figure}
\begin{picture}(0,280)
   \rput(-2,12){
   \scalebox{0.7}{
\rput(7.5,-4){
\scalebox{1}{
\pspolygon[linestyle=dashed,linewidth=0.4pt,linecolor=brown](-4.33,2.5)(4.33,-2.5)(4.33,-12.5)(-4.33,-7.5)
   \pspolygon[fillstyle=solid,fillcolor=yellow](-4.33,2.5)(-2.6,-0.5)(-2.6,-6.5)(-4.33,-7.5)
   \pspolygon[fillstyle=solid,fillcolor=yellow](4.33,-2.5)(2.6,-3.5)(2.6,-9.5)(4.33,-12.5)
   \rput(-2.3,-0.7){\large $A'$}
   \rput(2.25,-3.85){\large $B'$}
   \rput(2.3,-9.2){\large $C'$}
   \rput(-2.3,-6.1){\large $D'$}
\psline[linestyle=dashed,linewidth=0.4pt,linecolor=brown](4.33,-2.5)(-4.33,-7.5)
\psline[linestyle=dashed,linewidth=0.4pt,linecolor=brown](-4.33,2.5)(4.33,-12.5)
\psline[linestyle=dashed,linewidth=0.4pt,linecolor=brown](0,0)(0,-10)
   \pscustom[linewidth=0.8pt,fillstyle=solid,fillcolor=lightgray]{
\pscurve[linewidth=0.3pt](-4.33,2.5)(-3.75,1.54)(-3.18,0.678)(-2.6,-0.118)
\psline[linewidth=0.3pt,liftpen=1](-2.6,-0.5)(-4.33,2.5)}
   \pscustom[linewidth=0.8pt,fillstyle=solid,fillcolor=lightgray]{
\pscurve[linewidth=0.3pt](-2.6,1.495)(-2.1,0.678)(-1.7,0.08)(-1.3,-0.467)
\pscurve[linewidth=0.3pt,liftpen=1](-1.3,-0.27)(-1.7,0.196)(-2.1,0.735)(-2.6,1.5)}
   \pscustom[linewidth=0.8pt,fillstyle=solid,fillcolor=lightgray]{
\pscurve[linewidth=0.3pt](-1.3,0.75)(-0.9,0.15)(-0.65,-0.17)(-0.433,-0.422)
\pscurve[linewidth=0.3pt,liftpen=1](-0.433,-0.249)(-0.7,-0.05)(-1,0.295)(-1.3,0.75)}
     \pscustom[linewidth=0.8pt,fillstyle=solid,fillcolor=lightgray]{
   \psline[linewidth=0.3pt](-4.33,-7.5)(-2.6,-6.5)
   \pscurve[linewidth=0.3pt,liftpen=1](-2.6,-6.66)(-3.15,-6.89)(-3.7,-7.16)(-4.33,-7.5)}
     \pscustom[linewidth=0.8pt,fillstyle=solid,fillcolor=lightgray]{
   \pscurve[linewidth=0.3pt](-2.6,-8.5)(-2.1,-8.22)(-1.7,-8.02)(-1.3,-7.83)
   \pscurve[linewidth=0.3pt,liftpen=1](-1.3,-7.92)(-1.7,-8.06)(-2.1,-8.24)(-2.6,-8.5)}
     \pscustom[linewidth=0.8pt,fillstyle=solid,fillcolor=lightgray]{
   \pscurve[linewidth=0.3pt](-1.3,-9.25)(-1,-9.082)(-0.7,-8.933)(-0.433,-8.83)
   \pscurve[linewidth=0.3pt,liftpen=1](-0.433,-8.93)(-0.7,-8.99)(-1,-9.095)(-1.3,-9.25)}
\rput(-4.67,2.47){\large $A$}
\rput(4.6,-2.45){\large $B$}
\rput(4.6,-12.55){\large $C$}
\rput(-4.65,-7.5){\large $D$}
\rput(-4.5,-11.5){\Large (a)}
}}
\rput{180}(7.77,-14){
\scalebox{1}{
   \pscustom[linewidth=0.8pt,fillstyle=solid,fillcolor=lightgray]{
\pscurve[linewidth=0.3pt](-4.33,2.5)(-3.75,1.54)(-3.18,0.678)(-2.6,-0.118)
\psline[linewidth=0.3pt,liftpen=1](-2.6,-0.5)(-4.33,2.5)}
   \pscustom[linewidth=0.8pt,fillstyle=solid,fillcolor=lightgray]{
\pscurve[linewidth=0.3pt](-2.6,1.495)(-2.1,0.678)(-1.7,0.08)(-1.3,-0.467)
\pscurve[linewidth=0.3pt,liftpen=1](-1.3,-0.27)(-1.7,0.196)(-2.1,0.735)(-2.6,1.5)}
   \pscustom[linewidth=0.8pt,fillstyle=solid,fillcolor=lightgray]{
\pscurve[linewidth=0.3pt](-1.3,0.75)(-0.9,0.15)(-0.65,-0.17)(-0.433,-0.422)
\pscurve[linewidth=0.3pt,liftpen=1](-0.433,-0.249)(-0.7,-0.05)(-1,0.295)(-1.3,0.75)}
     \pscustom[linewidth=0.8pt,fillstyle=solid,fillcolor=lightgray]{
   \psline[linewidth=0.3pt](-4.33,-7.5)(-2.6,-6.5)
   \pscurve[linewidth=0.3pt,liftpen=1](-2.6,-6.66)(-3.15,-6.89)(-3.7,-7.16)(-4.33,-7.5)}
     \pscustom[linewidth=0.8pt,fillstyle=solid,fillcolor=lightgray]{
   \pscurve[linewidth=0.3pt](-2.6,-8.5)(-2.1,-8.22)(-1.7,-8.02)(-1.3,-7.83)
   \pscurve[linewidth=0.3pt,liftpen=1](-1.3,-7.92)(-1.7,-8.06)(-2.1,-8.24)(-2.6,-8.5)}
     \pscustom[linewidth=0.8pt,fillstyle=solid,fillcolor=lightgray]{
   \pscurve[linewidth=0.3pt](-1.3,-9.25)(-1,-9.082)(-0.7,-8.933)(-0.433,-8.83)
   \pscurve[linewidth=0.3pt,liftpen=1](-0.433,-8.93)(-0.7,-8.99)(-1,-9.095)(-1.3,-9.25)}
}} }}
   \rput(7.5,11.8){
   \scalebox{0.7}{
\rput(7.5,-4){
\scalebox{1}{
\pspolygon[linestyle=dashed,linewidth=0.4pt,linecolor=brown](-4.33,2.5)(4.33,-2.5)(4.33,-12.5)(-4.33,-7.5)
\psline[linestyle=dashed,linewidth=0.4pt,linecolor=brown](4.33,-2.5)(-4.33,-7.5)
\psline[linestyle=dashed,linewidth=0.4pt,linecolor=brown](-4.33,2.5)(4.33,-12.5)
\psline[linestyle=dashed,linewidth=0.4pt,linecolor=brown](0,0)(0,-10)
   \pscustom[linewidth=0.8pt,fillstyle=solid,fillcolor=lightgray]{
\pscurve[linewidth=0.3pt](-4.33,2.5)(-3.75,1.54)(-3.18,0.678)(-2.6,-0.118)
\psline[linewidth=0.3pt,liftpen=1](-2.6,-0.5)(-4.33,2.5)}
   \pscustom[linewidth=0.8pt,fillstyle=solid,fillcolor=lightgray]{
\pscurve[linewidth=0.3pt](-2.6,1.495)(-2.1,0.678)(-1.7,0.08)(-1.3,-0.467)
\pscurve[linewidth=0.3pt,liftpen=1](-1.3,-0.27)(-1.7,0.196)(-2.1,0.735)(-2.6,1.5)}
   \pscustom[linewidth=0.8pt,fillstyle=solid,fillcolor=lightgray]{
\pscurve[linewidth=0.3pt](-1.3,0.75)(-0.9,0.15)(-0.65,-0.17)(-0.433,-0.422)
\pscurve[linewidth=0.3pt,liftpen=1](-0.433,-0.249)(-0.7,-0.05)(-1,0.295)(-1.3,0.75)}
     \pscustom[linewidth=0.8pt,fillstyle=solid,fillcolor=lightgray]{
   \psline[linewidth=0.3pt](-4.33,-7.5)(-2.6,-6.5)
   \pscurve[linewidth=0.3pt,liftpen=1](-2.6,-6.66)(-3.15,-6.89)(-3.7,-7.16)(-4.33,-7.5)}
     \pscustom[linewidth=0.8pt,fillstyle=solid,fillcolor=lightgray]{
   \pscurve[linewidth=0.3pt](-2.6,-8.5)(-2.1,-8.22)(-1.7,-8.02)(-1.3,-7.83)
   \pscurve[linewidth=0.3pt,liftpen=1](-1.3,-7.92)(-1.7,-8.06)(-2.1,-8.24)(-2.6,-8.5)}
     \pscustom[linewidth=0.8pt,fillstyle=solid,fillcolor=lightgray]{
   \pscurve[linewidth=0.3pt](-1.3,-9.25)(-1,-9.082)(-0.7,-8.933)(-0.433,-8.83)
   \pscurve[linewidth=0.3pt,liftpen=1](-0.433,-8.93)(-0.7,-8.99)(-1,-9.095)(-1.3,-9.25)}
\rput(-4.67,2.47){\large $A$}
\rput(4.6,-2.45){\large $B$}
\rput(4.6,-12.55){\large $C$}
\rput(-4.65,-7.5){\large $D$}
\rput(-4.3,-11.5){\Large (b)}
}}
\rput{180}(7.77,-14){
\scalebox{1}{
   \pscustom[linewidth=0.8pt,fillstyle=solid,fillcolor=lightgray]{
\pscurve[linewidth=0.3pt](-4.33,2.5)(-3.75,1.54)(-3.18,0.678)(-2.6,-0.118)
\psline[linewidth=0.3pt,liftpen=1](-2.6,-0.5)(-4.33,2.5)}
   \pscustom[linewidth=0.8pt,fillstyle=solid,fillcolor=lightgray]{
\pscurve[linewidth=0.3pt](-2.6,1.495)(-2.1,0.678)(-1.7,0.08)(-1.3,-0.467)
\pscurve[linewidth=0.3pt,liftpen=1](-1.3,-0.27)(-1.7,0.196)(-2.1,0.735)(-2.6,1.5)}
   \pscustom[linewidth=0.8pt,fillstyle=solid,fillcolor=lightgray]{
\pscurve[linewidth=0.3pt](-1.3,0.75)(-0.9,0.15)(-0.65,-0.17)(-0.433,-0.422)
\pscurve[linewidth=0.3pt,liftpen=1](-0.433,-0.249)(-0.7,-0.05)(-1,0.295)(-1.3,0.75)}
     \pscustom[linewidth=0.8pt,fillstyle=solid,fillcolor=lightgray]{
   \psline[linewidth=0.3pt](-4.33,-7.5)(-2.6,-6.5)
   \pscurve[linewidth=0.3pt,liftpen=1](-2.6,-6.66)(-3.15,-6.89)(-3.7,-7.16)(-4.33,-7.5)}
     \pscustom[linewidth=0.8pt,fillstyle=solid,fillcolor=lightgray]{
   \pscurve[linewidth=0.3pt](-2.6,-8.5)(-2.1,-8.22)(-1.7,-8.02)(-1.3,-7.83)
   \pscurve[linewidth=0.3pt,liftpen=1](-1.3,-7.92)(-1.7,-8.06)(-2.1,-8.24)(-2.6,-8.5)}
     \pscustom[linewidth=0.8pt,fillstyle=solid,fillcolor=lightgray]{
   \pscurve[linewidth=0.3pt](-1.3,-9.25)(-1,-9.082)(-0.7,-8.933)(-0.433,-8.83)
   \pscurve[linewidth=0.3pt,liftpen=1](-0.433,-8.93)(-0.7,-8.99)(-1,-9.095)(-1.3,-9.25)}
}}
 \rput{-120}(19.28,-6.83){
\rput(7.5,-4){
\scalebox{1}{
\psline[linestyle=dashed,linewidth=0.4pt](0,0)(0,-10)
   \pscustom[linewidth=0.8pt,fillstyle=solid,fillcolor=lightgray]{
\pscurve[linewidth=0.3pt](4.33,2.5)(3.75,1.54)(3.18,0.678)(2.6,-0.118)
\psline[linewidth=0.3pt,liftpen=1](2.6,-0.5)(4.33,2.5)}
   \pscustom[linewidth=0.8pt,fillstyle=solid,fillcolor=lightgray]{
\pscurve[linewidth=0.3pt](2.6,1.495)(2.1,0.678)(1.7,0.08)(1.3,-0.467)
\pscurve[linewidth=0.3pt,liftpen=1](1.3,-0.27)(1.7,0.196)(2.1,0.735)(2.6,1.5)}
   \pscustom[linewidth=0.8pt,fillstyle=solid,fillcolor=lightgray]{
\pscurve[linewidth=0.3pt](1.3,0.75)(0.9,0.15)(0.65,-0.17)(0.433,-0.422)
\pscurve[linewidth=0.3pt,liftpen=1](0.433,-0.249)(0.7,-0.05)(1,0.295)(1.3,0.75)}
     \pscustom[linewidth=0.8pt,fillstyle=solid,fillcolor=lightgray]{
   \psline[linewidth=0.3pt](4.33,-7.5)(2.6,-6.5)
   \pscurve[linewidth=0.3pt,liftpen=1](2.6,-6.66)(3.15,-6.89)(3.7,-7.16)(4.33,-7.5)}
     \pscustom[linewidth=0.8pt,fillstyle=solid,fillcolor=lightgray]{
   \pscurve[linewidth=0.3pt](2.6,-8.5)(2.1,-8.22)(1.7,-8.02)(1.3,-7.83)
   \pscurve[linewidth=0.3pt,liftpen=1](1.3,-7.92)(1.7,-8.06)(2.1,-8.24)(2.6,-8.5)}
     \pscustom[linewidth=0.8pt,fillstyle=solid,fillcolor=lightgray]{
   \pscurve[linewidth=0.3pt](1.3,-9.25)(1,-9.082)(0.7,-8.933)(0.433,-8.83)
   \pscurve[linewidth=0.3pt,liftpen=1](0.433,-8.93)(0.7,-8.99)(1,-9.095)(1.3,-9.25)}
}}
\rput{180}(7.77,-14){
\scalebox{1}{
   \pscustom[linewidth=0.8pt,fillstyle=solid,fillcolor=lightgray]{
\pscurve[linewidth=0.3pt](4.33,2.5)(3.75,1.54)(3.18,0.678)(2.6,-0.118)
\psline[linewidth=0.3pt,liftpen=1](2.6,-0.5)(4.33,2.5)}
   \pscustom[linewidth=0.8pt,fillstyle=solid,fillcolor=lightgray]{
\pscurve[linewidth=0.3pt](2.6,1.495)(2.1,0.678)(1.7,0.08)(1.3,-0.467)
\pscurve[linewidth=0.3pt,liftpen=1](1.3,-0.27)(1.7,0.196)(2.1,0.735)(2.6,1.5)}
   \pscustom[linewidth=0.8pt,fillstyle=solid,fillcolor=lightgray]{
\pscurve[linewidth=0.3pt](1.3,0.75)(0.9,0.15)(0.65,-0.17)(0.433,-0.422)
\pscurve[linewidth=0.3pt,liftpen=1](0.433,-0.249)(0.7,-0.05)(1,0.295)(1.3,0.75)}
     \pscustom[linewidth=0.8pt,fillstyle=solid,fillcolor=lightgray]{
   \psline[linewidth=0.3pt](4.33,-7.5)(2.6,-6.5)
   \pscurve[linewidth=0.3pt,liftpen=1](2.6,-6.66)(3.15,-6.89)(3.7,-7.16)(4.33,-7.5)}
     \pscustom[linewidth=0.8pt,fillstyle=solid,fillcolor=lightgray]{
   \pscurve[linewidth=0.3pt](2.6,-8.5)(2.1,-8.22)(1.7,-8.02)(1.3,-7.83)
   \pscurve[linewidth=0.3pt,liftpen=1](1.3,-7.92)(1.7,-8.06)(2.1,-8.24)(2.6,-8.5)}
     \pscustom[linewidth=0.8pt,fillstyle=solid,fillcolor=lightgray]{
   \pscurve[linewidth=0.3pt](1.3,-9.25)(1,-9.082)(0.7,-8.933)(0.433,-8.83)
   \pscurve[linewidth=0.3pt,liftpen=1](0.433,-8.93)(0.7,-8.99)(1,-9.095)(1.3,-9.25)}
}}
}}}

\end{picture}
\caption{(a) A centrally symmetric body invisible in one direction and the shaded regions. (b) A body invisible in two directions.}
\label{fig rhomb}
\end{figure}

\begin{remark}\label{rem:1}
The arcs $p_i,\, q_i,\, p_i',\, q_i',\ i = 0,\, 1,\, 2,\ldots$ are graphs of functions; denote these functions by $p_i(-x)$,\, $q_i(-x)$,\, $p{\text{\!'}}_{\!i}(-x)$,\, $q{\text{\!'}}_{\!i}(-x)$,\, $x \in [c_{i+1},\, c_i]$. Then the sets $P_i,\, P_i'$ can be represented as
$$
P_i = \{ (x,y):\, -c_{i} \le x \le -c_{i+1},\ q_i(-x) \le y \le p_i(-x) \},
$$
$$
P_i' = \{ (x,y):\, -c_{i} \le x \le -c_{i+1},\ p{\text{\!'}}_{\!i}(-x) \le y \le q{\text{\!'}}_{\!i}(-x) \}.
$$
In particular, if the directions of invisibility are orthogonal, we have $p{\text{\!'}}_{\!i}(x) = -p_i(x)$,\, $q{\text{\!'}}_{\!i}(x) = -q_i(x)$, and the body $\AAA$ is analytically described as
$$
\AAA = \cup_{i=0}^\infty \{ (x,y):\, c_{i+1} \le |x| \le c_i, \ q_i(|x|) \le |y| \le p_i(|x|) \}.
$$
This representation will be used in the next section.
\end{remark}

Observe that the trapezoids $AA'D'D$ and $BB'C'C$ bounded by
the lines $x = \pm c_0$,\, $x = \pm c_1$ and by the diagonals
of the rhombus are completely ``shaded'' from the particles
falling in the vertical direction. We can therefore use this
area to make our body invisible in the second direction.

Namely, the body $\mathcal{B}$ symmetric to $\mathcal{A}$ with respect to the line $BD$ (or $AC$) is contained in the union of the trapezoids and is invisible in the direction $\overrightarrow{AB}$. The union $\mathcal{A} \cup \mathcal{B}$ is then invisible in both directions (Fig.~\ref{fig rhomb}\,(b)).

We have proved the following result. Observe that it includes Theorem \ref{thm:2Dthin} as a particular case.

\begin{theorem}\label{thm:2D}
For any two directions $v_1$ and $v_2 \in S^1$, there exists a {\rm solid fractal} body invisible in these directions.
\end{theorem}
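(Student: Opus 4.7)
The plan is to combine two versions of the construction from Proposition~\ref{invisPrelim}, exploiting the shading property of the trapezoids $AA'D'D$ and $BB'C'C$ to guarantee mutual non-interference. First I would rotate coordinates so that $v_1 = (0,-1)$ and choose a rhombus $ABCD$ centred at the origin whose two pairs of opposite sides are parallel to $v_1$ and $v_2$ respectively. Inside this rhombus I carry out the construction of Section~\ref{ss:2dir nonort} with the sequences $\{a_i\}$, $\{c_i\}$ from (\ref{ai})--(\ref{ci}), obtaining $\mathcal{A}_L$; Proposition~\ref{invisPrelim} gives that $\mathcal{A}_L$ is invisible in $v_1$. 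Adjoining the centrally symmetric copy $\mathcal{A}_R$ yields $\mathcal{A} = \mathcal{A}_L \cup \mathcal{A}_R$, still invisible in $v_1$.

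Next I would verify the shading claim: any vertical ray that meets the body is reflected by some $p_{i-1}$ (or $p_{i-1}'$) towards a focus $\mathfrak{a}_i$ on $AB$ (resp.\ $CD$), then reflected by $q_i$ (resp.\ $q_i'$) back to a vertical trajectory, and thus never enters the trapezoids $AA'D'D$ and $BB'C'C$ bounded by the diagonals and by the sides $AD$, $BC$. I then define $\mathcal{B}$ as the image of $\mathcal{A}$ under the reflection in the diagonal $BD$ (or $AC$): this isometry of the rhombus interchanges its two pairs of parallel sides and therefore swaps $v_1$ with $v_2$, so $\mathcal{B}$ is invisible in $v_2$; moreover $\mathcal{B}$ lies entirely inside the union of the two shaded trapezoids.

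Forming $\mathcal{A} \cup \mathcal{B}$, a $v_1$-particle encounters only $\mathcal{A}$ (since $\mathcal{B}$ sits in the $v_1$-shaded trapezoids) and so exits along the same vertical line by Proposition~\ref{invisPrelim}. The symmetric argument --- the shading observation applied after the diagonal reflection --- shows that $\mathcal{A}$ lies inside the analogous trapezoids shaded from $v_2$-trajectories, so a $v_2$-particle encounters only $\mathcal{B}$ and is likewise returned to its line. Hence $\mathcal{A} \cup \mathcal{B}$ is invisible in both $v_1$ and $v_2$; since each $P_i$, $P_i'$ and its reflected image is a genuinely two-dimensional curvilinear region bounded by parabolic arcs, the body is solid rather than thin. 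The main obstacle is this non-interference step: one must confirm that the shading property is invariant under the isometry exchanging $v_1$ and $v_2$, but this is immediate, as the whole construction is defined intrinsically inside the rhombus and the diagonal reflection is a symmetry of that rhombus.
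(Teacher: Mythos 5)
Your proposal is correct and follows essentially the same route as the paper: build $\mathcal{A}=\mathcal{A}_L\cup\mathcal{A}_R$ in a rhombus with sides parallel to the two directions, invoke Proposition~\ref{invisPrelim}, observe that the trapezoids $AA'D'D$ and $BB'C'C$ are shaded from the vertical flow, and adjoin the diagonal-reflected copy $\mathcal{B}$ inside them. The only difference is that you spell out the non-interference and direction-swapping arguments that the paper leaves as one-line observations.
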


\begin{remark}\label{rem:2}
In the limiting case $c_i = 2^{-i}c$,\, $a_i = 0\, (i \ge 1)$ one gets a {\rm thin} fractal. In particular, if the two directions are orthogonal, we have the construction described in the previous Subsection \ref{ss:2dir prelim}.
\end{remark}

\section{A body invisible in 3 directions}\label{s:3dir}

Using the two-dimensional construction described in the previous section, we can now describe a three-dimensional body invisible in 3 orthogonal
directions.

In the 3D case the construction is more complicated and intuition is less reliable; therefore we provide here a more detailed argument than in the previous section. Some of the accompanying figures, for better visibility, correspond to the limiting case of {\it thin} fractal body.

Let us first introduce some notation. We consider Euclidean
space $\RRR^3$ with orthogonal coordinates $x,\, y,\, z$ and
the cube $Q = [-c,\, c]^3$ centered at the origin $O =
(0,0,0)$. The pyramids with the vertex at $O$ and with the
bases $z = \pm c,\, |x| \le c,\, |y| \le c$ (the upper and
lower faces of the cube $Q$) are denoted by $\Pi^\pm_z$. In
other words,
$$
\Pi^\pm_z = \{ (x,y,z) \in Q :\, \pm z \ge \max\{ |x|,|y| \}\, \}.
$$
Further, $\Pi_z = \Pi^+_z \cup \Pi^-_z$. The pyramids $\Pi_x$
and $\Pi_y$ are defined in the same way. Notice that the
interiors of $\Pi_x,\, \Pi_y$, and $\Pi_z$ are mutually
disjoint.

Let $0 < c_1 < c$. For each $\ve_x \in \{ -1,1 \}$ and $\ve_z \in \{ -1,1 \}$ we define the gallery
$$
G(\ve_x,\ve_z) = \ve_x [c_1,\, c] \times [-c,\, c] \times \ve_z [c_1,\, c]
$$
(where by definition $\ve[a,\, b] = \left\{ \begin{array}{lc}
[\ve a,\, \ve b], & \text{if } \ \ve > 0\\ {}[-\ve b,\, -\ve a]{}, & \text{if } \ \ve < 0
\end{array} \right.$\,\big); it is a horizontal parallelepiped adjacent to an edge of $Q$ parallel to the $y$-axis. The union of 4 such parallelepipeds is
$$
G_y = \cup_{\ve_x,\ve_z=\pm 1} G(\ve_x,\ve_z).
$$
The sets $G_x$ and $G_z$ are defined similarly.

We are going to define three bodies $B_{x}$,\, $B_{y}$,\, $B_{z}$, invisible in the directions $x,\, y,\, z$, respectively,  and then take their union.

First we take the two-dimensional body $\AAA = \AAA_{yz}$ in the $yz$-plane, as described in the previous subsection \ref{ss:2dir nonort}. It corresponds to 2 {\it orthogonal} directions (parallel to the $y$- and $z$-axes) and is inscribed in the square $[-c,\, c]^2$. (Notice that the body $\AAA$ shown in Fig. \ref{fig rhomb}\,(a) corresponds to 2 {\it non-orthogonal} direction and is inscribed in a rhombus.) The body $\AAA_{yz}$ invisible in the $z$-direction is determined by the parameters $c_0 = c,\, c_1,\, c_2,\ldots;\, a_1,\, a_2,\ldots$ satisfying (\ref{ai}) and (\ref{ci}).

Take the direct product
$$
\tilde\AAA_{yz} = ([-c, -c_1] \cup [c_1,\, c]) \times \AAA_{yz};
$$
the resulting three-dimensional body $\tilde\AAA_{yz}$ is also
invisible in the $z$-direction. Notice that it is contained in
the union of $y$-galleries, \beq\label{inclusion1}
\tilde\AAA_{yz} \subset G_y. \eeq The body $\tilde\AAA_{yz}$ is
shown in { Fig. \ref{fig:3d01}\,(a)}; for the sake of better
visualization we chose to draw the limiting case of ``thin
fractal'' with $a_i = 0$ and $c_i = 2^{-i}c \ (i = 1,\,
2,\ldots)$.

Let $B_{yz} = \tilde\AAA_{yz} \cap \Pi_z$ (Fig.
\ref{fig:3d01}\,(b)). Then we analogously define the body
$B_{xz}$ and take
$$
B_z = B_{yz} \cup B_{xz}.
$$
The bodies $B_x$ and $B_y$ are defined in a similar way, and finally,
$$
B = B_x \cup B_y \cup B_z.
$$
The bodies $B_z$, $B_y$ and $B_x$ are shown on
Fig.~\ref{fig:3d04} (a), Fig.~\ref{fig:3d04} (b) and
Fig.~\ref{fig:3d05} (a) respectively. The body $B$ is shown on
Fig.~\ref{fig:3d05} (b). All the pictures correspond to the
``thin'' fractal case.
\begin{figure}[h]
\centering
\begin{picture}(440,220)
\put(0,30){\includegraphics[height=190pt,
keepaspectratio]{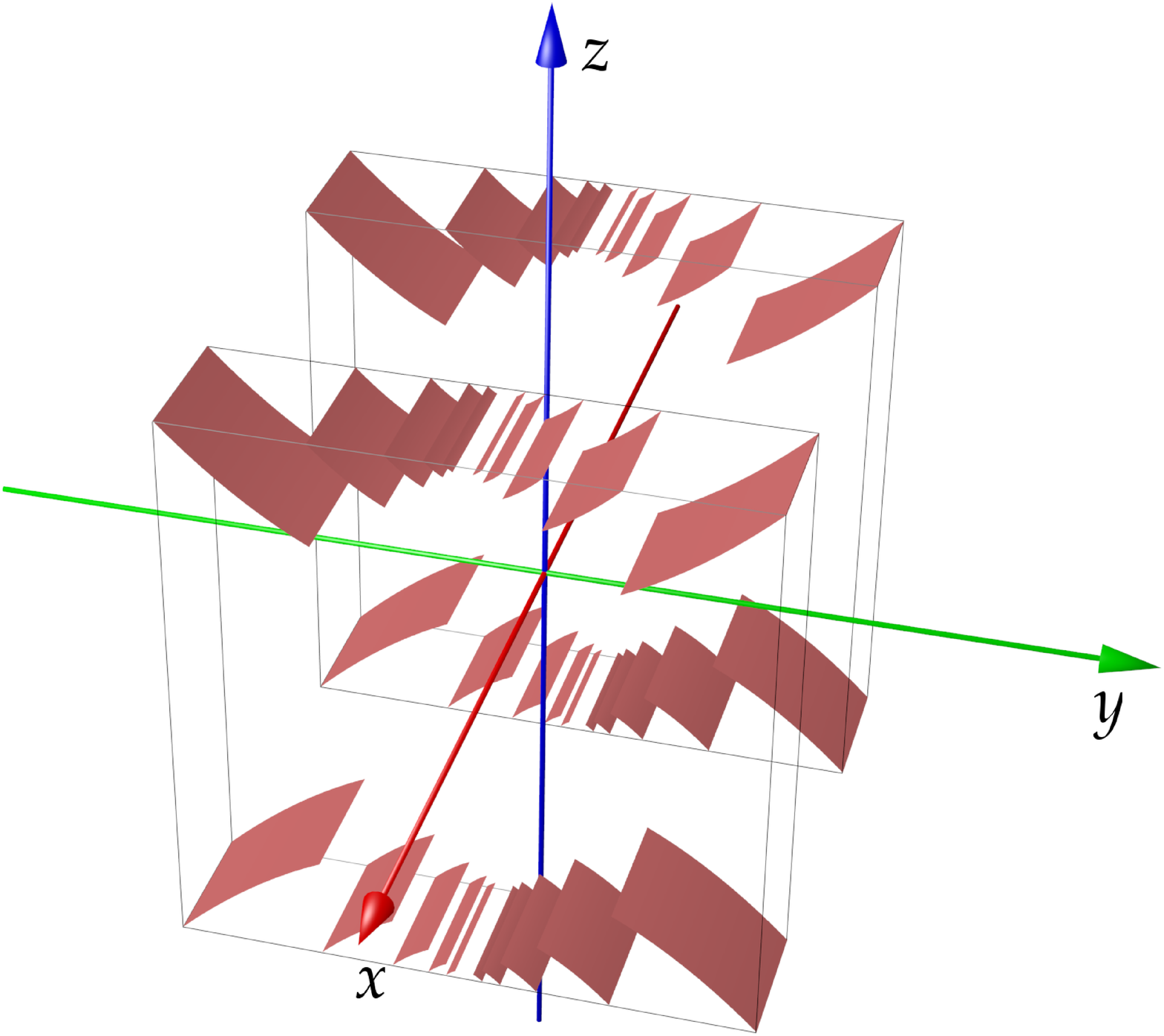}}
\put(220,30){\includegraphics[height=190pt,
keepaspectratio]{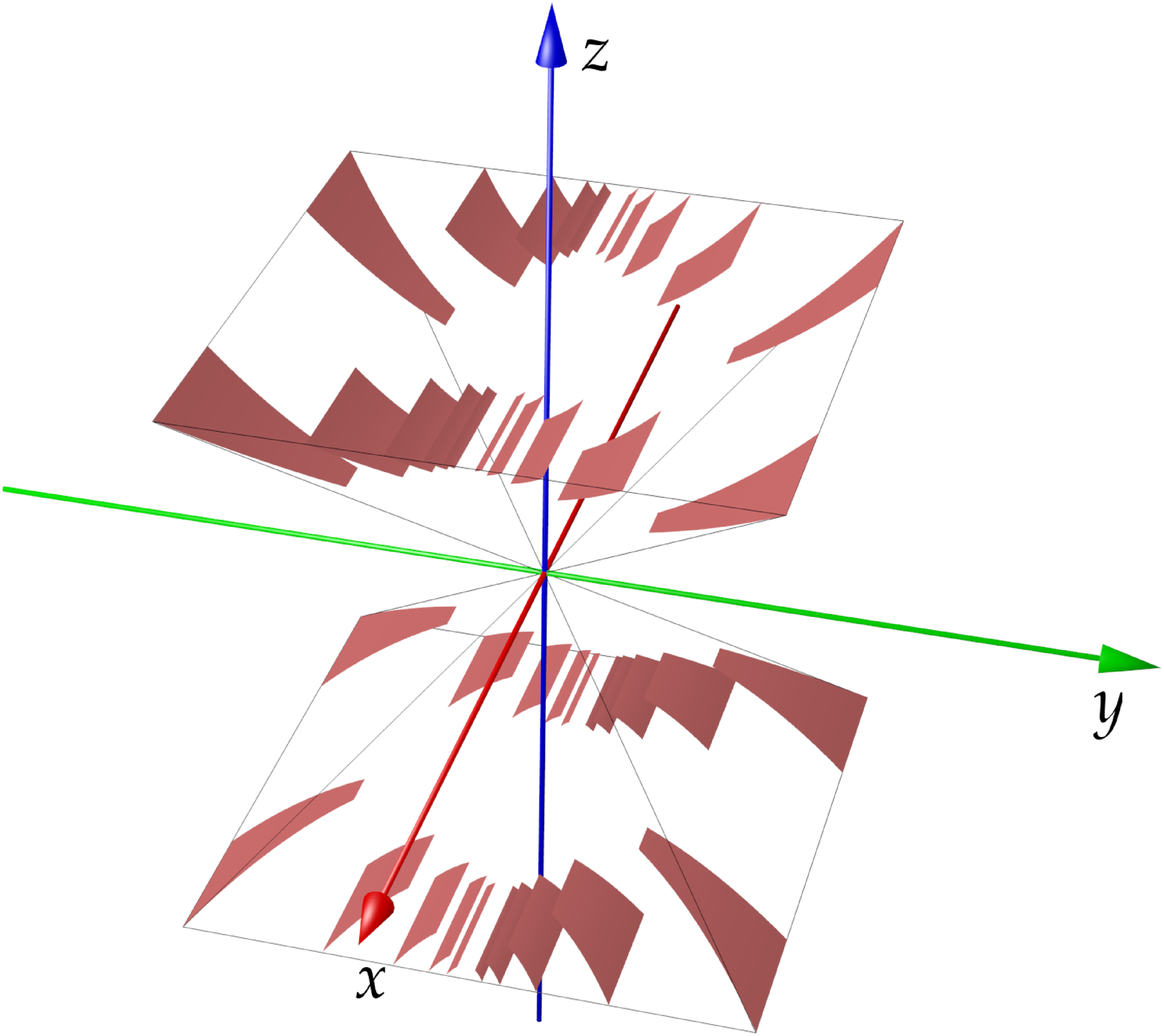}}
\put(60,10){(a)}
\put(260,10){(b)}
\end{picture}
\caption{The bodies (a) $\tilde\AAA_{yz}$ and (b) $B_{yz}$ are shown in the {\it thin fractal} case.}\label{fig:3d01}
\end{figure}

\begin{remark}\label{rem:3}
In terms of the functions $p_i$ and $q_i$ introduced in Remark \ref{rem:1}, the set $\tilde\AAA_{yz}$ can be written as
\begin{equation*}\label{til}
\tilde\AAA_{yz} = \cup_{i=0}^\infty \{ (x,y,z):\, c_{i+1} \le |y| \le c_i, \ q_i(|y|) \le |z| \le p_i(|y|), \ c_1 \le |x| \le c \}.
\end{equation*}
Taking the intersection of $\tilde\AAA_{yz}$ with $\Pi_z = \{ (x,y,z):\ |z| \ge |x|, \ |z| \ge |y| \}$ and using that $q_i(|y|) \ge |y|$, one gets
\begin{equation*}\label{byz}
B_{yz} = \cup_{i=0}^\infty \{ (x,y,z):\, c_{i+1} \le |y| \le c_i, \ q_i(|y|) \le |z| \le p_i(|y|), \ c_1 \le |x| \le |z| \},
\end{equation*}
and therefore,
\begin{equation}\label{byzincl}
B_{yz} \subset  \cup_{i=0}^\infty \{ (x,y,z):\, c_{i+1} \le |y| \le c_i, \ |x| \le p_i(|y|) \}.
\end{equation}
Similar relations are true for the other sets $B_{yx},\, B_{xz}$, etc; for example,
\begin{equation}\label{bxz}
B_{xz} = \cup_{i=0}^\infty \{ (x,y,z):\, c_{i+1} \le |x| \le c_i, \ q_i(|x|) \le |z| \le p_i(|x|), \ c_1 \le |y| \le |z| \le c \},
\end{equation}
\begin{equation}\label{byxincl}
B_{yx} \subset \cup_{i=0}^\infty \{ (x,y,z):\, c_{i+1} \le |y| \le c_i, \ q_i(|y|) \le |x| \le p_i(|y|) \}.
\end{equation}
\end{remark}

Now we are prepared for the proof of the following theorem.

\begin{theorem}\label{thm:3D}
There exists a {\rm solid fractal} body invisible in 3 mutually orthogonal directions.
\end{theorem}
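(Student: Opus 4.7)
My plan is to reduce the 3D problem to the 2D invisibility result (Proposition~\ref{invisPrelim}): for a particle falling vertically through a generic point $(x_0, y_0)$, I will show that the interaction with $B$ is governed entirely by one of its two-dimensional sub-bodies (either $B_{yz}$ or $B_{xz}$), so that the lifted planar trajectory yields invisibility.

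By the cyclic symmetry of $B = B_x \cup B_y \cup B_z$ under permutations of $(x, y, z)$, it suffices to establish invisibility in the $z$-direction. The additional $x \leftrightarrow y$ reflection symmetry lets me restrict to vertical lines $\ell = \{(x_0, y_0, z)\}$ with $|y_0| \ge |x_0|$; this puts $\ell$ outside $\mathrm{int}\,\Pi_x$, so the particle cannot touch $B_x \subset \Pi_x$. Since $B_{yz} \subset \tilde\AAA_{yz}$ consists of parabolic cylinders translation-invariant in $x$, any reflection from $B_{yz}$ preserves $x = x_0$ and acts on $(y, z)$ as the planar reflection from $\AAA_{yz}$. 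By Proposition~\ref{invisPrelim}, the corresponding 2D trajectory with $|y_0| \in (c_i, c_{i-1})$ consists of four reflections off $p_{i-1}, q_i, q_i', p_{i-1}'$ and returns to the same vertical line. The theorem thus reduces to showing that this lifted planar sequence is the actual trajectory of the 3D particle.

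Two things must be checked: (i) each reflection point of the lifted trajectory lies in $\Pi_z$ (so the 3D mirrors of $B_{yz}$ are really present there); and (ii) the straight segments connecting consecutive reflections do not pierce $B_y$ or $B_{xz}$. For (i), each reflection point $(x_0, y_k, z_k)$ lies on an arc that dominates the diagonal $|z| = |y|$ (since $q_0$ is exactly this diagonal, and the other arcs $p_j, q_j$ lie above it by construction), giving $|z_k| \ge |y_k|$; a short computation using the homothety from Proposition~\ref{propoHi} shows additionally that $|z_k| \ge z_1 = p_{i-1}(|y_0|) \ge |y_0| \ge |x_0|$, so $(x_0, y_k, z_k) \in \Pi_z$. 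For the two oblique segments near the top and bottom faces, $z$ is monotonically increasing while $|y|$ is monotonically decreasing (as $y$ retains a constant sign along the segment, a consequence of conditions (\ref{ai})--(\ref{ci})), so $|z| \ge \max(|y|, |x_0|)$ is preserved.

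The delicate case is the central vertical segment at $y = y_2$ between the second and third reflections: this segment descends through $z = 0$ and may enter $\Pi_y$. Using the explicit position $|y_2| = r_i|y_0| - (1 - r_i)a_i$ with $r_i = (c_i + a_i)/(c_{i-1} + a_i)$, the recursion~(\ref{ci}) yields $|y_2| \in (c_{i+1}, c_i)$, so $|y_2| < c_1$ for all $i \ge 1$. By the inclusions~(\ref{byzincl}), (\ref{byxincl}) and their analogs for $B_{xy}, B_{zy}, B_{xz}$, the bound $|y_2| < c_1$ excludes the middle segment from $B_y \cup B_{xz}$, since each of these bodies requires $|y| \ge c_1$ at the relevant $z$-levels. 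This detailed threading of the planar trajectory through the complement of the three constituent bodies is the main technical obstacle of the proof.
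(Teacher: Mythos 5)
Your overall strategy coincides with the paper's: pass to the auxiliary trajectory reflecting off $\tilde\AAA_{yz}$, invoke the planar invisibility of Proposition \ref{invisPrelim}, and then verify that the reflection points lie in $\Pi_z$ and that the connecting segments avoid the remaining constituent bodies. However, two of your reductions fail, and they conceal precisely the checks that carry the technical weight of the paper's argument.

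First, your dismissal of $B_x$ is invalid for the middle segment. You argue that $|y_0|\ge |x_0|$ places the initial line $\ell$ outside $\mathrm{int}\,\Pi_x$, so the particle ``cannot touch $B_x\subset\Pi_x$.'' But after two reflections the particle descends along a \emph{different} vertical line $(x_0,y^{(2)},\ast)$ with $|y^{(2)}|<c_1\le |x_0|$, and that line does enter $\mathrm{int}\,\Pi_x$ for $|z|<|x_0|$ (your remark that this segment ``may enter $\Pi_y$'' names the wrong pyramid: on it $|y|<|x|$). On that segment $B_{zx}$, $B_{xz}$ and the components of $B_y$ are indeed excluded, but $B_{yx}$ is not excluded by any ``$|y|\ge c_1$'' requirement: by (\ref{byxincl}) it has material at every level $c_{i+1}\le|y|\le c_i$ wherever $q_i(|y|)\le|x|\le p_i(|y|)$. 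The paper's proof devotes a separate step to establishing $x^{(2)}<q_{1}(y^{(2)})$ (via $x^{(2)}<z^{(2)}=q_1(y^{(2)})$), which is what actually keeps the $23$-segment out of $B_{yx}$; this inequality appears nowhere in your argument, and your stated justification for the middle segment is false for $B_{yx}$.

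Second, your case analysis is misaligned with the geometry. Inside your restricted region $|y_0|\ge|x_0|$, an interaction with $B_{yz}$ forces $|x_0|\ge c_1$ (since $\tilde\AAA_{yz}$ lives over $c_1\le|x|\le c$), hence $|y_0|\in(c_1,c_0)$ and only the outermost arcs $p_0,q_1$ are ever struck; your ``general $i$'' trajectory through $p_{i-1},q_i$ with $i\ge 2$ cannot occur there. The rays that reach the deeper arcs are those with $|x_0|\ge c_1>|y_0|$ --- the paper's case (ii) --- or, equivalently after your $x\leftrightarrow y$ reduction, rays with $|x_0|<c_1\le|y_0|$ interacting with $B_{xz}$ rather than $B_{yz}$, a configuration you announce in your opening sentence but never analyze. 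That case carries its own subtlety: when the incident abscissa exceeds $p_i$ of the transverse coordinate, the relevant piece of the scatterer is entirely cut away by $\Pi_z$ and one must verify, using (\ref{byzincl}) and (\ref{byxincl}), that the particle passes through untouched. As written, your proposal covers only the rays handled by the paper's case (i).
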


\begin{proof}
We are going to show that $B$ is invisible in the directions parallel to the coordinate axes.

It suffices to prove that $B$ is invisible in the $z$-direction; this will imply invisibility in the $x$- and $y$-directions, due to symmetry of the construction under the exchange of variables $x$,\, $y$ and $z$. We will actually show that the body $B_z$ is invisible for the incident flow in the $z$-direction and the bodies $B_x$ and $B_y$ are shadowed from this flow by $B_z$.

Consider an incident particle with the velocity $(0,0,-1)$. Our goal is to prove that it makes either 0 or 4 reflections from $B_z$ (and no reflections from $B_x$ and $B_y$) and moves freely afterwards; moreover, the initial and final parts of its trajectory belong to the same straight line.

If the orthogonal projection of the coordinate of the incident particle on the $xy$-plane lies inside the square $[-c_1,\, c_1]^2$ or outside the square $[-c,\, c]^2$ (see Fig. \ref{fig:frame}), the particle does not hit the body and there is nothing to prove. It remains therefore to consider the case where the projection is contained in $[-c,\, c]^2 \setminus [-c_1,\, c_1]^2$.

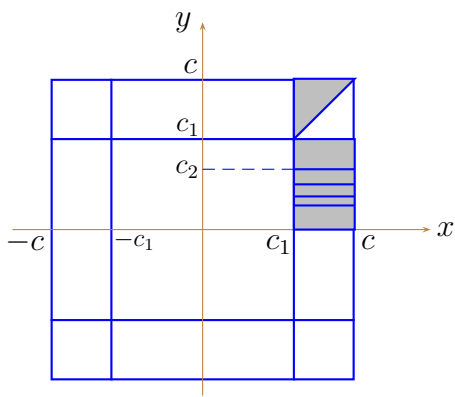
\begin{figure}
\begin{picture}(0,110)
   \rput(8,2.2){
   \psframe[linecolor=blue](-2,-2)(2,2)
   \psline[linecolor=blue](1.2,-2)(1.2,2)
   \psline[linecolor=blue](-1.2,-2)(-1.2,2)
   \psline[linecolor=blue](-2,1.2)(2,1.2)
   \psline[linecolor=blue](-2,-1.2)(2,-1.2)
      \psline[linewidth=0.4pt,linecolor=brown]{->}(-2.5,0)(3,0)
      \rput(3.2,0){$x$}
      \psline[linewidth=0.4pt,linecolor=brown]{->}(0,-2.2)(0,2.75)
      \rput(-0.25,2.75){$y$}
        \rput(-0.16,2.17){$c$}
        \rput(-0.19,1.36){\scalebox{0.9}{$c_1$}}
        \rput(-0.2,0.8){\scalebox{0.9}{$c_2$}}
      \pspolygon[linecolor=blue,fillstyle=solid,fillcolor=lightgray](1.2,1.2)(2,2)(1.2,2)
       \pspolygon[linecolor=blue,fillstyle=solid,fillcolor=lightgray](1.2,1.2)(2,1.2)(2,0)(1.2,0)
\psline[linecolor=blue](1.2,0.8)(2,0.8)
\psline[linecolor=blue,linestyle=dashed,linewidth=0.4pt](0,0.8)(1.2,0.8)
\psline[linecolor=blue](1.2,0.6)(2,0.6)
\psline[linecolor=blue](1.2,0.44)(2,0.44)
\psline[linecolor=blue](1.2,0.32)(2,0.32)
      \rput(2.17,-0.17){$c$}
      \rput(-2.34,-0.17){$-c$}
      \rput(1,-0.2){$c_1$}
      \rput(-0.9,-0.15){\scalebox{0.8}{$-c_1$}}
  }
   \end{picture}
\caption{The frame of the construction in the $xy$-projection.}
\label{fig:frame}
\end{figure}

Due to symmetry of the construction, it suffices to consider the cases where the projection belongs to (i)  the triangle $c_1 < x < y < c$ and (ii) the rectangle $c_1 < x < c$,\, $0 < y < c_1$ (they are shown grey in Fig. \ref{fig:frame}).

Consider the case (i). Take an auxiliary trajectory
corresponding to the particle with the same initial data making
reflections from $\tilde\AAA_{yz}$. At the first point of
reflection one has $z = p_0(y)$, at the second point, $z =
q_1(y)$, and the third and fourth reflection points are
symmetric to the first two points with respect to the
$xy$-plane. After the fourth reflection the particle moves
freely. We know that $\tilde\AAA_{yz}$ is invisible in the
$z$-direction; that is, the initial and final parts of the
auxiliary trajectory belong to a straight line. It remains to
show that the points of impact actually belong to $B_{yz}$ and
the auxiliary trajectory does not intersect the rest of the
body $B \setminus B_{yz}$; this will imply that it is a {\it
true} billiard trajectory in the complement of $B$.

The parts of the auxiliary trajectory in $Q$ before the first reflection, between the first and the second reflection, and between the second and the third reflection will be referred to as $01$-segment, $12$-segment, and $23$-segment, respectively. At the first reflection point $(x^{(1)}, y^{(1)}, z^{(1)})$ one has
$$
z^{(1)} = p_0(y^{(1)}) > y^{(1)} > x^{(1)},
$$
therefore this point belongs to the interior of $\Pi_z$, and so, $(x^{(1)}, y^{(1)}, z^{(1)}) \in B_{yz}$.  At the second reflection point $(x^{(2)}, y^{(2)}, z^{(2)})$ one has $x^{(2)} = x^{(1)}$,\, $0 < y^{(2)} < y^{(1)}$, and $z^{(2)} > z^{(1)}$, therefore this point also belongs to the interior of $\Pi_z$, and so, $(x^{(2)}, y^{(2)}, z^{(2)}) \in B_{yz}$. Since $\Pi_z$ is convex, one concludes that the segment with endpoints $(x^{(1)}, y^{(1)}, z^{(1)})$ and $(x^{(2)}, y^{(2)}, z^{(2)})$ also belongs to the interior of $\Pi_z$.

Using (\ref{bxz}) and the inequality $c_1 < x^{(1)} < c$, one concludes that the intersection of $B_{xz}$ with the plane $x = x^{(1)}$  belongs to the set $\{ c_1 \le |y| \le c,\ z \le p_0(x^{(1)}) \}$. On the other hand, $p_0(x^{(1)}) < p_0(y^{(1)})$ and the $01$- and $12$-segments belong to the set $\{ z \ge p_0(y^{(1)}) \}$, and the $23$-segment belongs to the set $\{ |y| < c_1 \}$. This implies that the first three segments of the trajectory do not intersect $B_{xz}$, and due to the symmetry of both $B_{xz}$ and the trajectory with respect to the $xy$-plane, this is true for the whole trajectory.

Further, the $01$- and $12$-segments belong to the interior of $\Pi_z$, and therefore, do not intersect $B_{x}$ and $B_y$. Due to the symmetry with respect to the $xy$-plane, this is also true for the last two segments of the auxiliary trajectory. It remains therefore to prove that the $23$-segment does not intersect the bodies $B_{x} = B_{yx} \cup B_{zx}$ and $B_y = B_{zy} \cup B_{xy}$.

The sets $B_{zx}$,\, $B_{zy}$ and $B_{xy}$ belong to the galleries $G_z$ and $G_x$, and therefore cannot have points in common with the    $23$-segment. It remains to check the set $B_{yx}$.

At the second reflection point $(x^{(2)}, y^{(2)}, z^{(2)})$ one has $z^{(2)} = q_1(y^{(2)})$ and $z^{(2)} > x^{(2)}$. At each point of the $23$-segment one has $x = x^{(2)}$,\, $y = y^{(2)}$, and therefore, $x < q_1(y)$ and $c_2 < y < c_1$. By (\ref{byxincl}), no such point belongs to $B_{yx}$.

Consider now the case (ii). Take $i \ge 1$ such that $c_{i+1} < y < c_i$. (The limiting case $y = c_i$ has zero measure in the space of billiard trajectories; in this case the particle hits a singular point of $B$ and the motion is not defined since then.)

If $x > p_i(y)$, the particle does not hit $B$. Indeed, by (\ref{byzincl}), the vertical straight line $(x,y,\ast)$ does not intersect $B_{yz}$, and by (\ref{byxincl}), it does not intersect $B_{yx}$. The other sets $B_{xy},\, B_{xz},\, B_{zx},\, B_{zy}$ comprising $B$ belong to the galleries $G_x$ and $G_z$, and therefore do not intersect this straight line.

Assume that $x < p_i(y)$ and consider an auxiliary trajectory
with the same initial data making reflections from
$\tilde\AAA_{yz}$. As in the case (i), there are 4 reflections;
the first three segments of the trajectory (between the point
of entering $Q$ and the 1$^{\text{st}}$ reflection point;
between the 1$^{\text{st}}$ and the 2$^{\text{nd}}$ reflection
points; between the 2$^{\text{nd}}$ and the 3$^{\text{rd}}$
reflection points) will be referred to as $01$-segment,
$12$-segment, and $23$-segment, respectively. The trajectory is
symmetric with respect to the $xy$-plane, and the final
velocity, $(0,0,-1)$, coincides with the initial one.

Repeating the argument of (i), one concludes that the
1$^{\text{st}}$ and 2$^{\text{nd}}$ reflection points and the
$12$-segment joining them belong to $\Pi_z$, and by symmetry,
the 3$^{\text{rd}}$ and 4$^{\text{th}}$ reflection points, and
the segment joining them, also belong to $\Pi_z$. Therefore all
the reflection points belong to $B_{yz}$. It remains to check
that the auxiliary trajectory does not intersect $B \setminus
B_{yz}$, and therefore, is a {\it true} trajectory in the
complement of $B$.

Recall that the sets $B_{xy},\, B_{xz},\, B_{zx},\, B_{zy}$ belong to $G_x \cup G_z$, and therefore do not intersect the trajectory. It remains to check the set $B_{yx}$. Since the $01$- and the $12$-segments belong to $\Pi_z$, they do not have points in common with $B_{yx}$. The same is true for the last two segments symmetric to them. It remains to check the $23$-segment.

Let $(x^{(1)}, y^{(1)}, z^{(1)})$ and $(x^{(2)}, y^{(2)},
z^{(2)})$ be the points of first and second reflection. Notice
that the $23$-segment belongs to the straight line $(x^{(2)},
y^{(2)}, \ast)$. One has \beq\label{f1} c_{i+2} < y^{(2)} <
c_{i+1}, \eeq $x^{(2)} = x^{(1)} = x$,\, $0 < y^{(2)} < y^{(1)}
= y$,\, $p_i(y^{(1)}) < q_{i+1}(y^{(2)})$, and therefore,
\beq\label{f2} x^{(2)} < q_{i+1}(y^{(2)}). \eeq By
(\ref{byxincl}),(\ref{f1}),  and (\ref{f2}), the straight line
$(x^{(2)}, y^{(2)}, \ast)$ does not intersect $B_{yx}$. Hence
the theorem is proved.
\end{proof}

\begin{figure}[h]
\centering
\begin{picture}(440,220)
\put(0,30){\includegraphics[height=190pt,
keepaspectratio]{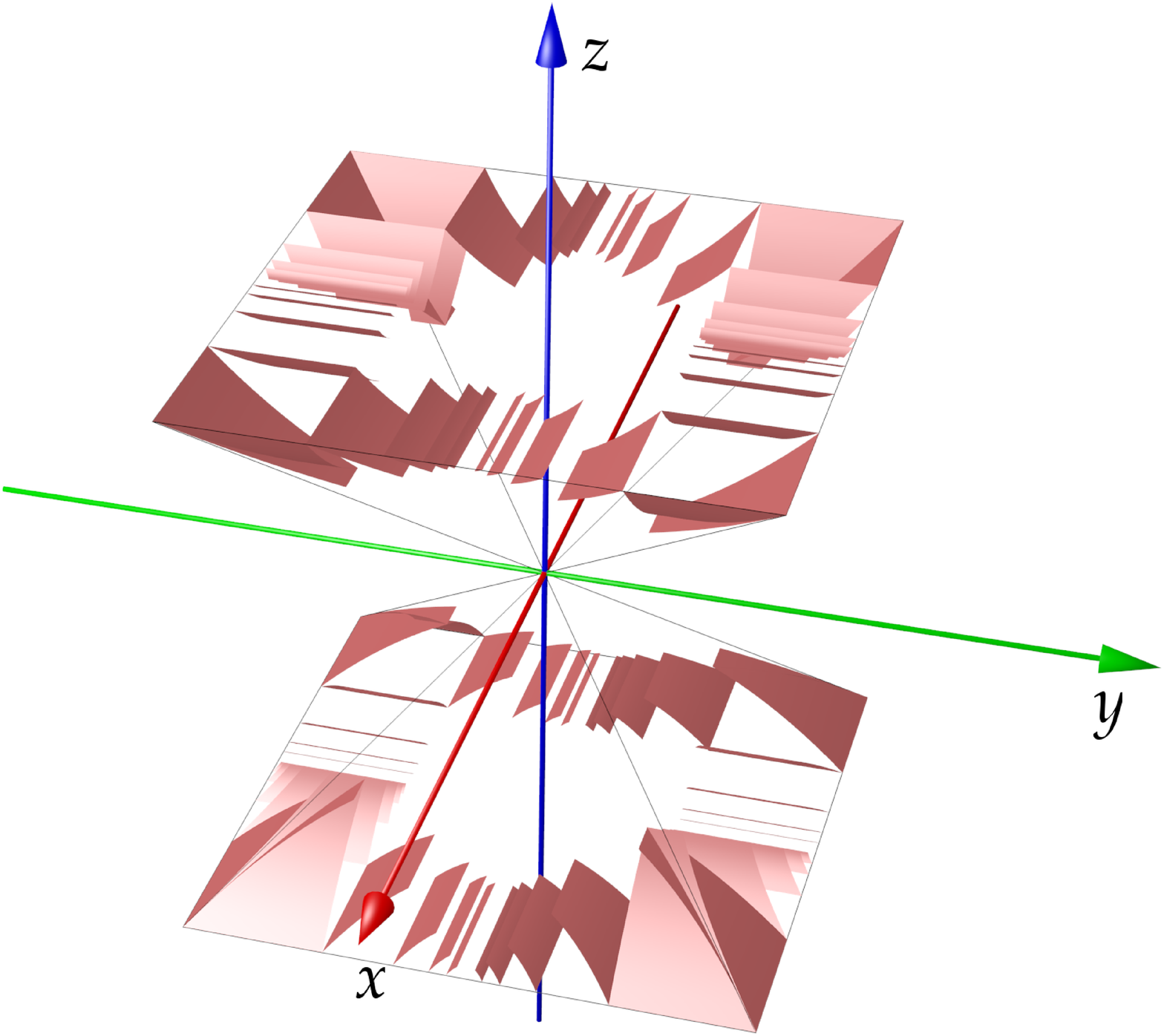}}
\put(220,30){\includegraphics[height=190pt,
keepaspectratio]{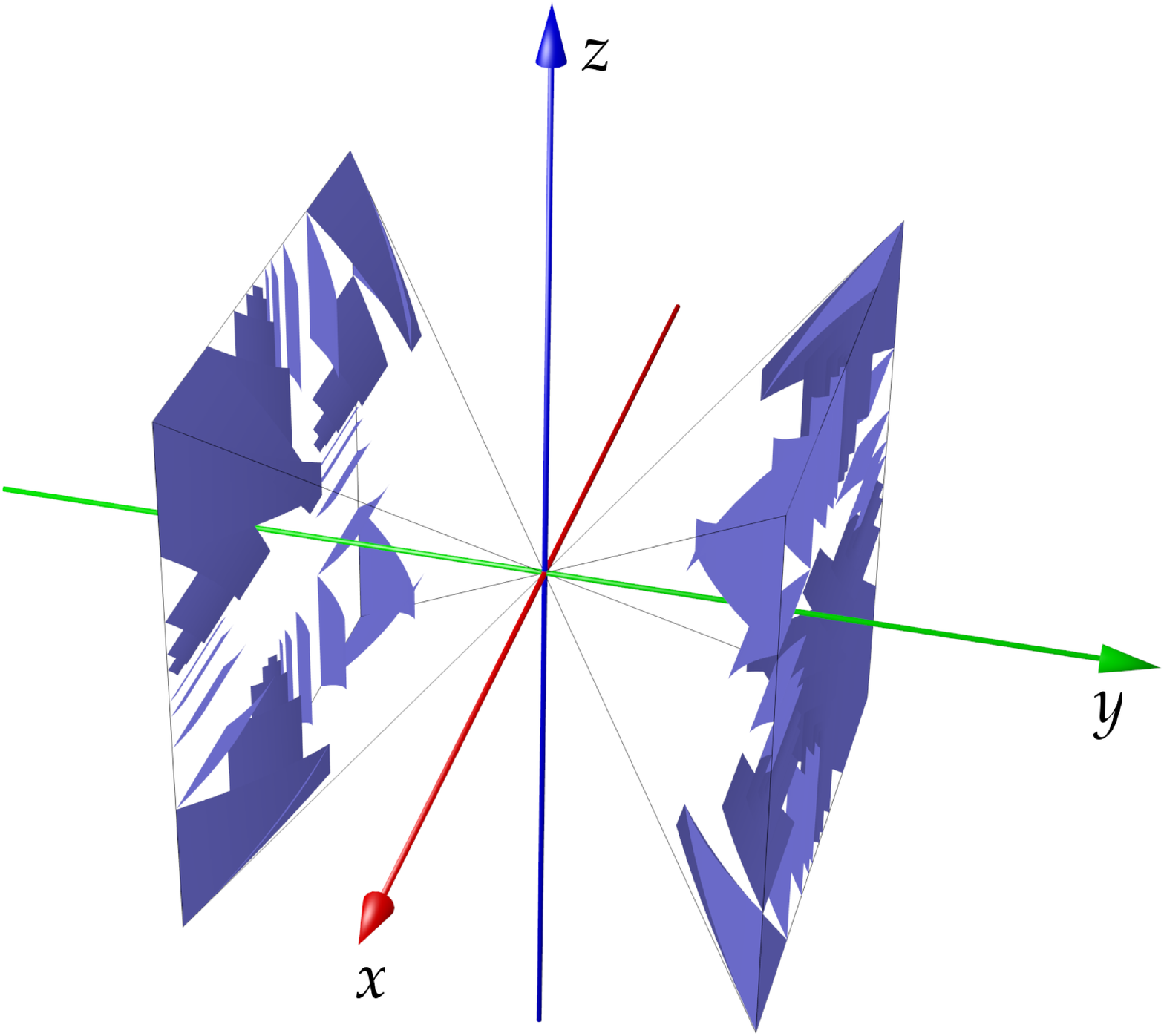}}
\put(60,10){(a)}
\put(260,10){(b)}
\end{picture}
\caption{Non-overlapping bodies invisible in different directions: (a) along the $z$-axis; (b) along the $y$-axis}\label{fig:3d04}
\end{figure}

\begin{figure}[h]
\centering
\begin{picture}(440,220)
\put(0,30){\includegraphics[height=190pt,
keepaspectratio]{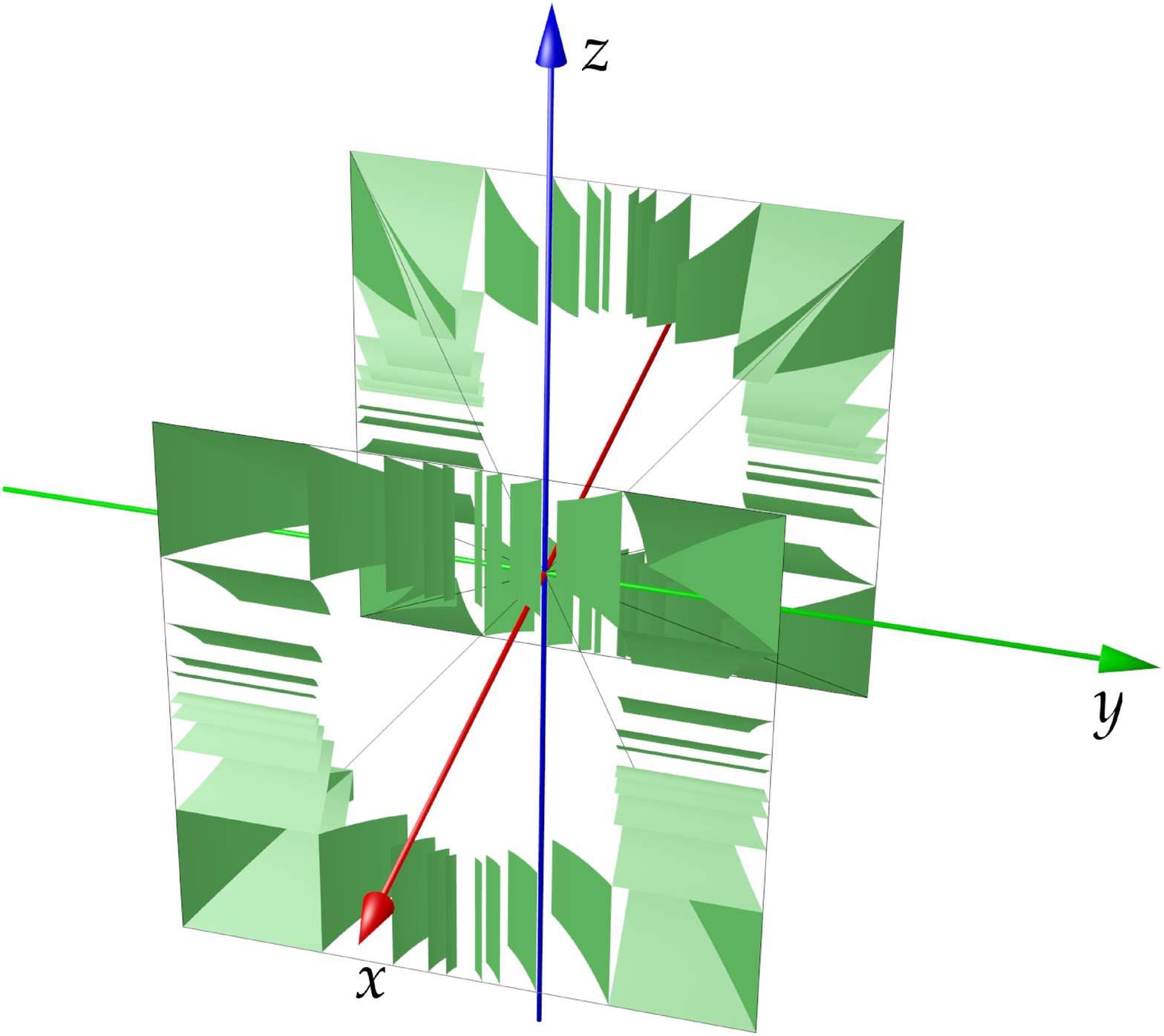}}
\put(220,30){\includegraphics[height=190pt,
keepaspectratio]{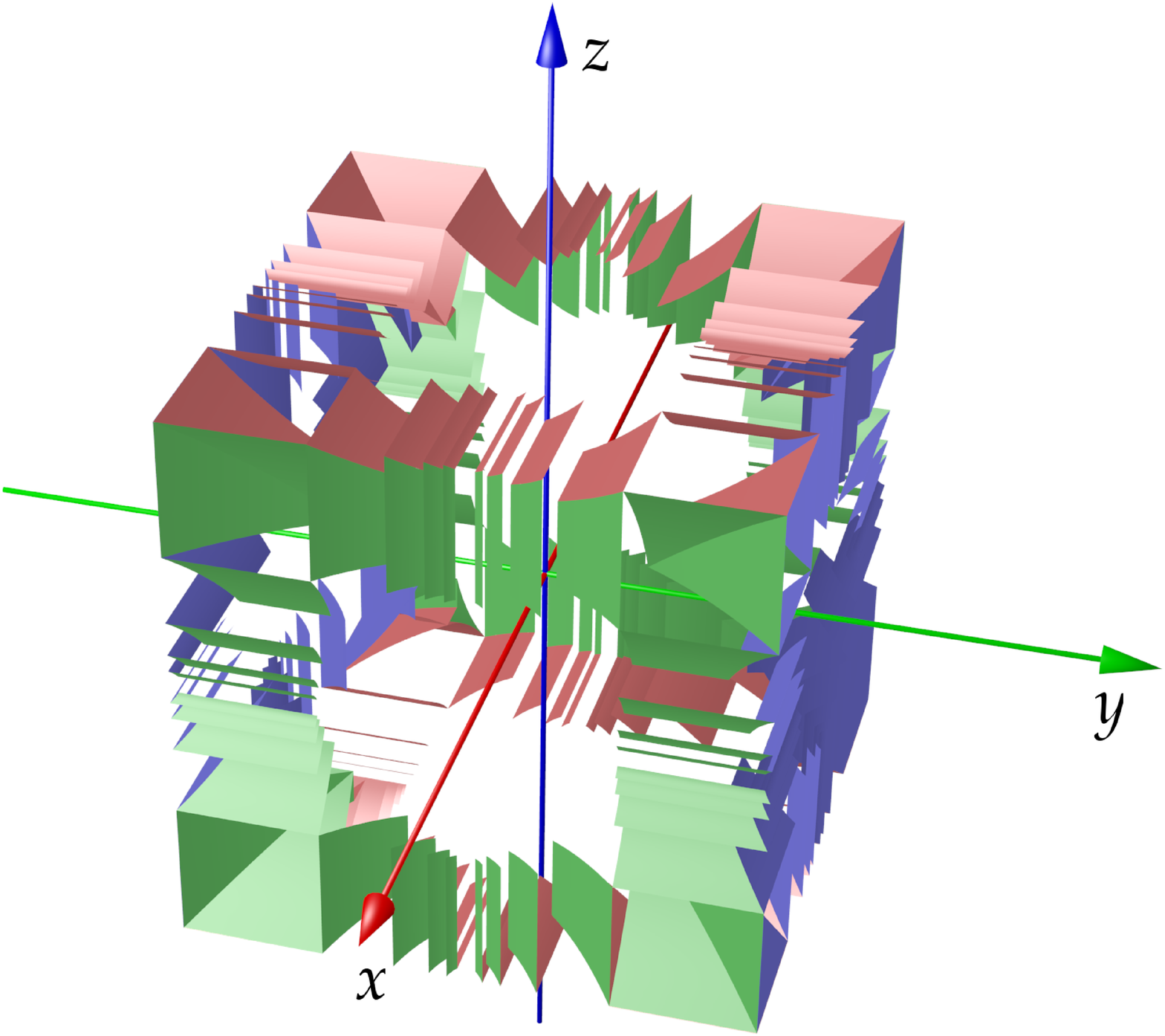}}
\put(60,10){(a)}
\put(260,10){(b)}
\end{picture}
\caption{A body invisible in the direction along the $x$-axis (a) and a body invisible in 3 directions (b)}\label{fig:3d05}
\end{figure}

\begin{remark}\label{rem:4}
We do not know if it is possible to generalize our construction to 3 {\rm non-orthogonal} directions.  The direct generalization does not work even in the case where two non-orthogonal directions lie in the $xy$-plane and the third one coincides with the $z$-direction. Indeed, a particle falling vertically down and hitting a mirror in the gallery $G_y$, will then move in a direction orthogonal to the $y$-axis (and not parallel to the $x$-axis, which would be desirable), and therefore may fail to hit the opposite mirror in that gallery.
\end{remark}

\section{Summary}\label{s:summary}

We have shown that there exist bodies invisible in 2 directions
in the two-dimensional case and in 3 directions in the
three-dimensional case. It was not known earlier whether such
bodies exist. We believe that our construction can be more or
less directly generalized to $n$ directions of invisibility for
$n$-dimensional bodies, $n > 3$.

There are, however, many open questions. Can we construct a
body invisible in $n$ directions in $n$-dimensional space
without using any fractal constructions? Are there bodies
invisible in more than $n$ directions in $n$-dimensional space?
What is the maximal number of directions of invisibility? How
to introduce an adequate ``measure of invisibility'' for a body
observed in all directions and find the ``most invisible''
body?

There is an intriguing observation related to the existing
constructions. There exist connected (and even homeomorphic to
the ball) bodies invisible in 1 direction \cite{0-resist}. The
body invisible in 2 directions found in
\cite{PlakhovRoshchina2011} is disconnected. The body invisible
in 3 directions has an infinite number of connected components.
We wonder if the increased complexity of the shape is the cost
one should pay for the increased number of directions, or
whether it is just an artefact of the particular constructions.

These problems are easy to understand, and the existing results
can be explained by using only basic school math. However,
there are no {\em tools} or {\em techniques} for constructing
invisible bodies, and this makes the subject even more
exciting.

\section*{Acknowledgements}

This work was partly supported by the {Center for Research and
Development in Mathematics and Applications (CIDMA)} from the
''{\it Funda\c{c}\~{a}o para a Ci\^{e}ncia e a Tecnologia}''
(FCT), cofinanced by the European Community Fund FEDER/POCTI,
and by the FCT research project PTDC/MAT/113470/2009.

\end{document}